\documentclass[leqno,12pt]{article} 
\setlength{\textheight}{23cm}
\setlength{\textwidth}{16cm}
\setlength{\oddsidemargin}{0cm}
\setlength{\evensidemargin}{0cm}
\setlength{\topmargin}{0cm}
\usepackage{amsmath, amssymb}
\usepackage{amsthm} 
\usepackage{amssymb}
\usepackage{mathrsfs}
\usepackage{amssymb, url, color, graphicx, amscd, mathrsfs}
\usepackage[colorlinks=true, bookmarks=true, pdfstartview=FitH, pagebackref=true, linktocpage=true, linkcolor = magenta, citecolor = blue]{hyperref}
\usepackage{graphicx}
\usepackage{times}
%
%
\theoremstyle{plain} 
\newtheorem{theorem}{\indent\sc Theorem}[section]
\newtheorem{lemma}[theorem]{\indent\sc Lemma}
\newtheorem{corollary}[theorem]{\indent\sc Corollary}

\theoremstyle{definition} 

\numberwithin{equation}{section}
%

%

\DeclareMathOperator{\Ric}{Ric}

\makeatletter
\def\address#1#2{\begingroup
\noindent\parbox[t]{7.8cm}{%
\small{\scshape\ignorespaces#1}\par\vskip1ex
\noindent\small{\itshape E-mail address}%
\/: #2\par\vskip4ex}\hfill%
\endgroup}%
\makeatother
%
\pagestyle{myheadings}
\markright{Rigidity and vanishing theorems} 

\title{{Rigidity and vanishing theorems for complete translating solitons}} 
\author{
%
%
\textsc{Ha Tuan Dung, Nguyen Thac Dung, and Tran Quang Huy} 
}
\date{} 
%

\usepackage[pagewise]{lineno}
\begin{document}

\maketitle

\footnote{ 
2010 \textit{Mathematics Subject Classification}.
Primary 53C40; Secondary 53C20, 58E20.
}
\footnote{ 
\textit{Key words and phrases}. Rigidity theorems, Second fundamental forms; Sobolev inequality; Translating solitons; Weighted harmonic $1$-forms.

}


\begin{abstract}
In this paper, we prove some rigidity theorems for complete translating solitons. Assume that the $L^q$-norm of the trace-free second fundamental form is finite, for some $q\in\mathbb{R}$ and using a Sobolev inequality, we show that translator must be hyperspace. Our results can be considered as a generalization of \cite{Ma, WXZ16, Xin15}. We also investigate a vanishing property for translators which states that there are no nontrivial $L_f^p\ (p\geq2)$ weighted harmonic $1$-forms on ${M}$ if the $L^n$-norm of the second fundamental form is bounded.
\end{abstract}
\section{Introduction}
Let $X_0:M^n \rightarrow \mathbb{R}^{n+m}$ be an $n$-dimensional smooth submanifold immersed in an $(n+m)$-dimensional Euclidean space $\mathbb{R}^{n+m}$. The mean curvature flow with initial value $X_0$ is a family of immersions $X: M \times [0,T) \rightarrow \mathbb{R}^{n+m}$ satisfying 
\begin{equation}
\left\{\begin{array}{ll}
\frac{d}{d t} X(x,t)=H(x,t), \\
X(x,0)=X_0(x),
\end{array}\right.
\end{equation}
for $x \in M, t \in [0,T)$, where $H(x,t)$ is the mean curvature vector of $X_t(M)=M_t$ at $X(x,t)$ in $\mathbb{R}^{n+m}$ and  $X_t(\cdot)=X(t,\cdot)$.

 One of the most important parts in study of mean curvature flow is singularity analysis. For several situations, the second fundamental form with respect to the family $M_t$ may develop singularities. For example, if $M$ is compact, the second fundamental form will blow up in a finite time. According to the blow-up rate of the second fundamental form, we divide singularities of mean curvature flow into two types, Type-I singularity and Type-II singularity. Since the geometry of the solution near the Type-II singularity cannot be controlled well, the study of Type-II singularities is much more complicated than type-I singularities. 

A very important example of Type-II singularities is the translating soliton. A submanifold $X: M^n \rightarrow \mathbb{R}^{n+m}$ is said to be a translating soliton if there exists a unit constant vector $V$ in $\mathbb{R}^{n+m}$ such that
\begin{equation}
H=V^N,
\end{equation}
where $V^N$ denotes the normal component of $V$ in $\mathbb{R}^{n+m}$. Let $V^T$ be the tangent component of $V$, then 
\begin{equation}\label{translatingequa}
H+V^T = V.
\end{equation}

Translating solitons often occur as Type-II singularities after rescaling. Also, a translating soliton corresponds to a translating solution $M_t$ of the mean curvature flow defined by $M_t=M+tV$. There are very few examples of translating solitons even in the hypersurface case. The first basic examples are translating solitons which are also the minimal hypersurfaces. By \eqref{translatingequa} we know that $V$ must be tangential to the soliton. Consequently, these solitons could have the form of $\tilde{M} \times L$, where $L$ is a line parallel to $V$ and $\tilde{M}$ is a minimal hypersurface in $L^{\perp}$. Some other non-trivial examples are grim reapers and grim reaper cylinders. The grim reaper $\Gamma$ is a one-dimensional translating soliton in $\mathbb{R}^2$ defined as the graph of
$$y=-\log \cos x, \quad x \in\left(-\frac{\pi}{2}, \frac{\pi}{2}\right).$$
A trivial generalization is the Euclidean product $\Gamma \times \mathbb{R}^{n-1}$ in $\mathbb{R}^{n+1}$, which is called grim reaper cylinder. For further examples of translators, we refer the reader to \cite{Nguyen22, Nguyen23}.
 
As mentioned above, translators play an important role in the study of mean curvature flow because they arise as blow-up solutions at type II singularities. Huisken and Sinestrari \cite{HS99} proved that at type II singularity of a mean curvature flow with mean convex solution, there exists a blow-up solution which is a convex translating solution. In \cite{Wang11}, Wang proved that when $n=2$, every entire convex translator must be rotationally
symmetric. However, in every dimension greater than two, there exist non-rotationally symmetric, entire convex translators. Later, Haslhofer \cite{Has} obtained the uniqueness theorem
of the bowl soliton in all dimensions under the assumptions of uniformly 2-convexity and
noncollapsing condition. In \cite{Xin15}, Xin studied some basic properties of translating solitons: the volume growth, generalized maximum principle, Gauss maps and certain functions related to the Gauss map. He also gave integral estimates for the squared norm of the second fundamental form and used them to show rigidity theorems for translators in the Euclidean space in higher co-dimension. Some of Xin's results then were extended by Wang, Xu, and Zhao by using integral curvature pinching conditions of the trace-free second fundamental form (see \cite{WXZ16}). Recently, in \cite{IR17, IR19} Impera and Rimoldi studied rigidity results and topology at infinitity of translating solitons of the mean curvature flows. Their approaches relies on the theory of $f$-minimal hypersufaces. In particular, they established weighted Sobolev inequalities and used them to show that an $f$-stable translator has at most one end.  They also investigate some relationship between the space of $L^2$ weighted harmonic $1$-forms, cohomology with compact support, and the index of the translator in term of generalized Morse index of stable operator.  Using Sobolev inequalities discovered by Impera and Rimoldi, Kunikawa and Sato \cite{KS19} pointed out that any complete $f$-stable translating soliton admits no codimension one cycle which does not disconnect $M$. As a consequence, any two dimensional complete $f$-stable translator has genus zero. For further discussion on translators, we refer to \cite{Guang19, Has, HS99, IR17, IR19, KP17, KS19, Nguyen22, Nguyen23, Smoczyk, WXZ16, Wang11, Xin15} and the references therein.

In this paper, motivated by \cite{IR17, Ma, WXZ16, Xin15}, we investigate some rigidity theorems and study the connectedness at infinity of complete translators in Euclidean spaces. Our first main theorem is as follows
\begin{theorem}\label{th:th12}
Let $M^n (n \geq 3)$ be a smooth complete translating soliton in
the Euclidean space $\mathbb{R}^{n+m}$. If the trace-free second fundamental form ${\Phi}$ of $M$
satisfies
$$\left(\int_{M}|{\Phi}|^{n} \mathrm{d} \mu\right)^{1 / n}<K(n,a) \quad \text {and}\quad \int_{M}|{\Phi}|^{2a} e^{(V, X)}<\infty,$$
where  $$1\leq a < \dfrac{n+\ \sqrt[]{n^2-2n}}{2},$$
$$K(n,a)=\sqrt{\frac{{{(n-2)}^{2}}\left( a-\frac{1}{2} \right)}{{{D}^{2}}(n)\left[ \frac{{{(n-2)}^{2}}\left( a-\frac{1}{2} \right)}{2n\left( na-\frac{n}{2}-{{a}^{2}} \right)}+{{(n-1)}^{2}} \right]\iota {{a}^{2}}}},\quad \iota=\begin{cases}2&\text{if }m=1,\\
4&\text{if }m\geq2 \end{cases},$$  and $D(n)$ is the Sobolev constant defined in Lemma \ref{1:1}. Then $M$ is a
linear subspace.
\end{theorem}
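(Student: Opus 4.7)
The proof should follow the now-standard strategy of coupling a Simons-type identity on the translator with a weighted integral estimate and the Sobolev inequality of Lemma~\ref{1:1}. Set $f=-\langle V,X\rangle$, so that the drift Laplacian $\Delta_f:=\Delta+\langle V^T,\nabla\cdot\rangle$ is self-adjoint with respect to the weighted measure $e^{\langle V,X\rangle}\,d\mu$. Using the translator equation $H=V^N$ in the standard Simons identity (together with the Gauss--Codazzi--Ricci equations in $\mathbb{R}^{n+m}$), I would first derive a scalar Simons-type inequality
$$|\Phi|\,\Delta_f|\Phi|\ \geq\ \bigl(|\nabla\Phi|^2-|\nabla|\Phi||^2\bigr)\ -\ \iota\,|\Phi|^4\ -\ c(n)\,|H|\,|\Phi|^3,$$
with $\iota=2$ if $m=1$ and $\iota=4$ if $m\geq 2$ (this is where the dichotomy in the definition of $\iota$ originates); the cross-term involving $|H|$ is harmless since $|H|\le |V|=1$ and it can be absorbed via Young's inequality into the $|\Phi|^4$ term. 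A refined Kato inequality of the form $|\nabla\Phi|^2-|\nabla|\Phi||^2\geq \frac{2}{n}|\nabla|\Phi||^2$ then gives a clean scalar estimate for $|\Phi|$ alone.

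Next I would test this inequality against $|\Phi|^{2a-2}\eta^{2}$, where $\eta$ is a compactly supported Lipschitz cutoff, and integrate against $e^{\langle V,X\rangle}\,d\mu$. Integration by parts with respect to the weighted measure turns $\int|\Phi|^{2a-1}\Delta_f|\Phi|\,\eta^2 e^{\langle V,X\rangle}$ into a weighted gradient integral; a Cauchy--Schwarz absorption of the cross-term $\langle\nabla|\Phi|,\nabla\eta\rangle$ and routine algebra then give an estimate of the schematic form
$$\frac{(n-2)^{2}(a-\tfrac12)}{2n(na-\tfrac n2-a^2)}\int|\nabla(|\Phi|^{a})|^{2}\eta^{2}e^{\langle V,X\rangle} + (n-1)^{2}\int|\Phi|^{2a}|\nabla\eta|^{2}e^{\langle V,X\rangle}\ \geq\ \iota\,a^{2}\int|\Phi|^{2a+2}\eta^{2}e^{\langle V,X\rangle},$$
in which the restriction $a<\tfrac{n+\sqrt{n^{2}-2n}}{2}$ appears precisely as the positivity condition for $na-\tfrac n2-a^{2}>0$, so that the first coefficient is well defined and positive.

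I would then apply the Sobolev inequality of Lemma~\ref{1:1}, with constant $D(n)$, to $u=|\Phi|^{a}\eta$ to bound the gradient integral from below by $D(n)^{-2}\bigl(\int u^{2n/(n-2)}\bigr)^{(n-2)/n}$, and use H\"older's inequality
$$\int|\Phi|^{2a+2}\eta^{2}\,d\mu\ \leq\ \Bigl(\int|\Phi|^{n}\,d\mu\Bigr)^{2/n}\Bigl(\int(|\Phi|^{a}\eta)^{\frac{2n}{n-2}}\,d\mu\Bigr)^{(n-2)/n}$$
on the other side to split off the $L^n$-norm of $\Phi$. Plugging everything together yields an inequality of the schematic form
$$\Bigl(\tfrac{1}{K(n,a)^{2}}-\Bigl(\int|\Phi|^{n}\Bigr)^{2/n}\Bigr)\Bigl(\int(|\Phi|^{a}\eta)^{\frac{2n}{n-2}}\Bigr)^{(n-2)/n}\ \leq\ \text{cutoff error terms},$$
and the smallness hypothesis on $\|\Phi\|_{n}$ makes the left-hand coefficient strictly positive. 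Sending the cutoff $\eta$ to exhaust $M$---the finiteness $\int|\Phi|^{2a}e^{\langle V,X\rangle}<\infty$ is precisely what guarantees the cutoff errors vanish in the limit---forces $|\Phi|\equiv 0$. Once $M$ is totally umbilic, the translator relation $H=V^{N}$ with $V$ a nonzero constant vector rules out spherical umbilic pieces, so $H\equiv 0$ and $M$ is totally geodesic, hence a linear subspace.

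The \textbf{main obstacle} is the careful bookkeeping of constants in the Simons--Kato--integration-by-parts step, so that the admissible threshold comes out exactly equal to $K(n,a)$ and the range $1\le a<(n+\sqrt{n^{2}-2n})/2$ arises naturally from the positivity of $na-\tfrac n2-a^{2}$; a secondary delicate point is to justify, via the $L^{2a}_{f}$-finiteness of $|\Phi|$, that a suitable exhaustion by cutoffs makes the boundary error terms tend to zero in the weighted measure.
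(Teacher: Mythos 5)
Your overall strategy --- a Simons-type inequality for $|\Phi|$, testing against $|\Phi|^{2a-2}\eta^{2}$ in the weighted measure, the Sobolev inequality of Lemma~\ref{1:1}, H\"older's inequality to split off $\|\Phi\|_{n}$, and a cutoff exhaustion justified by $\int_{M}|\Phi|^{2a}e^{\langle V,X\rangle}<\infty$ --- is the same as the paper's, and you correctly locate the origin of the restriction $a<\frac{n+\sqrt{n^{2}-2n}}{2}$ in the positivity of $na-\frac{n}{2}-a^{2}$. However, there is one genuine missing idea. Lemma~\ref{1:1} is an \emph{unweighted} Sobolev inequality and carries the extra term $\left(1+\frac{1}{s}\right)\frac{1}{n^{2}}\||H|u\|_{2}^{2}$. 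Applied to $u=|\Phi|^{a}\eta$ as you propose, this produces a term of the form $C\int_{M}|\Phi|^{2a}|H|^{2}\eta^{2}$, a zeroth-order quantity with no smallness (only $|H|\le 1$) that cannot be absorbed into the gradient term, into $\int|\Phi|^{2a+2}\eta^{2}$, or into the cutoff errors; your sketch silently drops it. The paper's resolution is to apply Lemma~\ref{1:1} to $\varphi=|\Phi|^{a}\varrho^{1/2}\eta$ with $\varrho=e^{\langle V,X\rangle}$: the identities $\Delta\varrho=\varrho\left(|V^{T}|^{2}+|H|^{2}\right)=\varrho$ and $|\nabla\varrho^{1/2}|^{2}=\frac{1}{4}\varrho|V^{T}|^{2}$, together with $|V^{T}|^{2}+|H|^{2}=1$, convert $\int|\nabla\varphi|^{2}$ into $\int|\nabla(|\Phi|^{a}\eta)|^{2}\varrho$ \emph{minus} a definite multiple of $\int|\Phi|^{2a}|H|^{2}\eta^{2}\varrho$, and one then tunes the two free parameters $s$ (from Lemma~\ref{1:1}) and $\delta$ (from Cauchy--Schwarz) so that the net coefficient of $\int|\Phi|^{2a}|H|^{2}\eta^{2}\varrho$ vanishes. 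This is not mere bookkeeping: it is the mechanism that closes the argument and generates the precise shape of $K(n,a)$. Building the weight into the Sobolev test function also fixes the measure mismatch in your H\"older step (the weighted integral $\int|\Phi|^{2a+2}\eta^{2}\varrho$ must be compared with $\left(\int\varphi^{2n/(n-2)}\right)^{(n-2)/n}$, not with the unweighted $\left(\int(|\Phi|^{a}\eta)^{2n/(n-2)}d\mu\right)^{(n-2)/n}$).

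Two smaller points. The refined Kato inequality $|\nabla\Phi|^{2}-|\nabla|\Phi||^{2}\ge\frac{2}{n}|\nabla|\Phi||^{2}$ you invoke is neither available in arbitrary codimension without extra hypotheses (in this paper it appears as Lemma~\ref{ma}, valid for hypersurfaces under $|\nabla A|\le\frac{3n+1}{2n}|\nabla H|$) nor needed here: the gradient coefficient $4\left(a-\frac{1}{2}-\varepsilon\right)$ comes from the $2|\nabla|\Phi||^{2}$ already present in \eqref{eq:eq1} plus the $4(a-1)$ produced by integrating $\Delta_{f}|\Phi|^{2}$ by parts against $|\Phi|^{2a-2}\eta^{2}$. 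Finally, the concluding step after $|\Phi|\equiv 0$ should be carried out as in the paper (showing $|\nabla A|=|\nabla|A||$ and invoking the splitting argument of Impera--Rimoldi), though your observation that compact umbilic pieces are excluded for translators is in the right spirit.
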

The proof of this theorem relies on a Sobolev inequality in immersed submanifolds which was first verified in \cite{HS74} and \cite{MS73}. When $a=\frac{n}{2}$, Theorem \ref{th:th12} recovers Theorem 1 in \cite{WXZ16}. As noted in \cite{WXZ16}, the curvature condition in Theorem \ref{th:th12} is weaker than that in Theorem 7.1 in \cite{Xin15}. If translators are located in a halfspace, in \cite{IR17, IR19}, Impera and Rimoldi proved a weighted Sobolev inequality by using the bijective corresponding found out by Smoczyk \cite{Smoczyk} between translators and minimal hypersurfaces in a suitable warped product. Apply their Sobolev inequality, we are able to obtain the following theorem.
\begin{theorem}\label{main2}
Let $M^n (n \geq 3)$ be a smooth complete translating soliton in
the Euclidean space $\mathbb{R}^{n+1}$ contained in the halfspace $\Pi_{v, a}=$ $\left\{y \in \mathbb{R}^{n+1}:\langle y, v\rangle \geq a\right\}$, where $a\in\mathbb{R}, v\in\mathbb{R}^{n+1}$ are fixed. If the second fundamental form $A$ of $M$
satisfies
$$\left(\int_{M} |A|^{n} \varrho \right)^{\frac{1}{n}}<\sqrt{\frac{(n^2-2n+2)(n-2)^2}{n^3 S(n)^2 (n-1)^2}},$$
	where $S(n)$ is the Sobolev constant as in Lemma \ref{sob} and $\rho=e^{\left\langle V, X \right\rangle}$.
 Then $M$ is a
hyperplane.
\end{theorem}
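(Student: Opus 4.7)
The strategy parallels the proof of Theorem~\ref{th:th12}, with two modifications tailored to the hypersurface and halfspace setting: we work with the full second fundamental form $A$ rather than its traceless part $\Phi$ (which simplifies the Simons identity), and we use the weighted Sobolev inequality of Impera--Rimoldi (Lemma~\ref{sob}) in place of the extrinsic Hoffman--Spruck--Michael--Simon one. The weighted $L^n$ smallness of $|A|$ is exactly what is needed to absorb the quartic nonlinearity that arises from Simons.

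First I would establish a Bochner--Simons pointwise inequality for $|A|$. For a hypersurface translator, the soliton equation $H = \langle V,\nu\rangle$ together with the classical Simons identity, combined with the drift Laplacian $\Delta_f u := \Delta u + \langle V,\nabla u\rangle$, yields
$$\tfrac{1}{2}\Delta_f |A|^2 = |\nabla A|^2 - |A|^4.$$
The refined Kato inequality for hypersurfaces, in the form that absorbs $|\nabla H|^2$ via the soliton equation, then upgrades this to
$$|A|\,\Delta_f |A| \geq \frac{n^2-2n+2}{(n-1)^2}\,|\nabla |A||^2 - |A|^4,$$
which is the source of the factor $(n^2-2n+2)/(n-1)^2$ in the constant $K(n)$.

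Next I would test this inequality against $\eta^2 |A|^{2q}$ for a compactly supported cutoff $\eta$ and a suitable exponent $q\ge 0$, integrate against $\varrho\,d\mu = e^{\langle V,X\rangle}d\mu$, and integrate by parts using the symmetry of $\Delta_f$ with respect to the weighted measure. Young's inequality, the weighted Sobolev inequality applied to $u = \eta|A|^{q+1}$,
$$\paren{\int_M u^{\frac{2n}{n-2}}\varrho}^{\frac{n-2}{n}} \leq S(n)^2 \int_M |\nabla u|^2\,\varrho,$$
and Hölder's inequality in the form
$$\int_M \eta^2 |A|^{2q+4}\,\varrho \leq \paren{\int_M |A|^n \varrho}^{2/n}\paren{\int_M u^{\frac{2n}{n-2}}\varrho}^{\frac{n-2}{n}},$$
reduce the estimate to an algebraic inequality in $q$. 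The optimal choice of $q$ produces the threshold $K(n)^2 = \frac{(n^2-2n+2)(n-2)^2}{n^3 S(n)^2 (n-1)^2}$; under the strict smallness hypothesis the Sobolev term is strictly absorbable into the LHS. Letting $\eta$ exhaust $M$ along intrinsic balls, and using the polynomial weighted volume growth available for translators in halfspaces to kill the cutoff error $\int|\nabla\eta|^2|A|^{2q+2}\varrho$, forces $|A|\equiv 0$, so $M$ is totally geodesic and hence a hyperplane. The main obstacle, as for sharp rigidity results of this type, is the numerical bookkeeping required to combine the Kato, Simons and Sobolev--Hölder constants into exactly $K(n)$; the overall architecture, the use of Lemma~\ref{sob}, and the exhaustion argument are otherwise standard.
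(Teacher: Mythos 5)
Your overall architecture --- test a Simons-type inequality against $\eta^2$ times a power of the curvature, integrate by parts in the measure $\varrho\,d\mu$, absorb the quartic term via Lemma \ref{sob} plus H\"older, and exhaust by cutoffs --- is exactly the paper's. The paper, however, runs the scheme on $|\Phi|$ (applying \eqref{eq:eq17} to $u=|\Phi|^{n/2}\eta$, using \eqref{eq:eq6} with $\iota=2$, and only passing to $|A|$ at the H\"older step via $|A|^2=|\Phi|^2+\frac{1}{n}|H|^2$ and $|\Phi|\le|A|$), whereas you run it on $|A|$ directly using $\tfrac12\Delta_f|A|^2=|\nabla A|^2-|A|^4$. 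That identity is correct for hypersurface translators, and your variant is arguably cleaner: the hypothesis is on $\|A\|_{n,\varrho}$ anyway, and the conclusion $|A|\equiv0$ gives the hyperplane immediately, bypassing the $|\nabla A|=|\nabla|A||$ / Impera--Rimoldi step the paper needs after first showing $|\Phi|\equiv0$. Two small corrections: the cutoff error is killed by the finiteness of $\int_M|A|^{2q+2}\varrho$, i.e.\ by the hypothesis $\int_M|A|^n\varrho<\infty$ once $2q+2=n$, not by weighted volume growth; and the choice $q=\tfrac n2-1$ is dictated by the H\"older exponents, not by optimization.

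The genuine gap is the ``refined Kato inequality'' $|\nabla A|^2\ge\frac{n^2-2n+2}{(n-1)^2}|\nabla|A||^2$, which you assert without proof and on which you hang the factor $n^2-2n+2$ in the threshold. For a translator $\nabla H\not\equiv0$, so the Schoen--Simon--Yau improvement is not available unconditionally; the only Kato refinement appearing in this paper (Lemma \ref{ma}, after Ma--Miquel) concerns $\Phi$ rather than $A$, requires the extra hypothesis $|\nabla A|\le\frac{3n+1}{2n}|\nabla H|$, and yields the constant $\frac{n+1}{n}$, not $1+\frac{1}{(n-1)^2}$. As written, the step producing your specific constant is therefore unjustified. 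Fortunately it is also unnecessary: with the plain Kato inequality, testing $\Delta_f|A|\ge-|A|^3$ against $\eta^2|A|^{2q+1}$ already regenerates the gradient term $(2q+1)\int\eta^2|A|^{2q}|\nabla|A||^2\varrho$, and the bookkeeping then yields the threshold $\sqrt{(n-2)^2/\bigl(n^2S(n)^2(n-1)\bigr)}$ --- precisely the constant $K_2(n)$ that the paper's own proof derives, and strictly larger than the constant stated in Theorem \ref{main2} (the ratio of their squares is $n(n-1)/(n^2-2n+2)>1$ for $n\ge3$). So your scheme does prove the theorem once the unproved Kato step is replaced by the standard one; you should make that replacement explicit rather than rely on the refined inequality.
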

Compared with the results in \cite{WXZ16} \cite{Xin15}, this result drops the assumption on the smallness of $L^n$-norm of $|A|$, instead of this, we require the weighed $L^n$-norm of $|A|$ is small. In fact, in \cite{Xin15}, the author supposed that the weighted $L^n$-norm of $|A|$ is finite and $L^n$-norm is small.
Hence, when the weighted $L^n$-norm of $|A|$ is small, this theorem can be considered as a refinement of Theorem 7.1 in \cite{Xin15}. Moreover, using the weighted Sobolev inequality, we obtain a vanishing theorem as follows.
\begin{theorem}\label{harmonic}
Let $x: {M}^{n} \rightarrow \mathbb{R}^{n+1}$ be a smooth complete translator contained in the halfspace $\Pi_{v,a}=\{p\in\mathbb{R}^{n+1}: \left\langle p,v\right\rangle\geq a\}$, where $v\in\mathbb{R}^{n+1}$ is a given vector, $a\in\mathbb{R}$ is fixed, and $n\geq3$. Assume that for any $p\geq2,$
$${{\left\| A \right\|}_{n,f}} < \frac{\sqrt{\left( p-1 \right)}\left( n-1 \right)}{pS(n)},$$
	where $S(n)$ is the Sobolev constant as in Lemma \ref{sob}.
Then there are no nontrivial $L_f^p$ weighted harmonic $1$-forms on ${M}.$
\end{theorem}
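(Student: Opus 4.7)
The plan is to combine the weighted Bochner identity for harmonic 1-forms with the refined Kato inequality, weighted Hölder, and the weighted Sobolev inequality of Lemma \ref{sob}, all in a single cutoff-testing argument.

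First I would record the key geometric input $\Ric_f = -A^2$ for a translator. With $f = -\langle V, X\rangle$ so that $d\mu_f = e^{\langle V, X\rangle}d\mu$, the Gauss equation gives $\Ric = HA - A^2$ and a direct computation using $H = \langle V, \nu\rangle$ shows $\Hess f = -HA$; the mean-curvature terms cancel, yielding $\Ric_f(\omega, \omega) = -|A\omega|^2 \geq -|A|^2|\omega|^2$. Combined with the weighted Bochner identity $\tfrac{1}{2}\Delta_f|\omega|^2 = |\nabla\omega|^2 + \Ric_f(\omega, \omega)$ and the refined Kato inequality $|\nabla\omega|^2 \geq \tfrac{n}{n-1}|\nabla|\omega||^2$ valid for harmonic 1-forms, this would give, writing $u := |\omega|$,
\[ u\,\Delta_f u \;\geq\; \tfrac{1}{n-1}|\nabla u|^2 - |A|^2 u^2 \]
on $\{u>0\}$, with the usual regularization $u \mapsto \sqrt{u^2+\varepsilon}$ handling the zero set (routine for $p\geq 2$).

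Next I would multiply by $\eta^2 u^{p-2}$, where $\eta$ is a Lipschitz cutoff supported in $B_{2R}$, equal to $1$ on $B_R$, with $|\nabla\eta|\leq 2/R$, integrate against $d\mu_f$, and integrate by parts to reach
\[
\bigl(p-1+\tfrac{1}{n-1}\bigr)\!\int \eta^2 u^{p-2}|\nabla u|^2\, d\mu_f + 2\!\int \eta u^{p-1}\nabla\eta\cdot\nabla u\, d\mu_f \;\leq\; \int \eta^2 u^p |A|^2\, d\mu_f.
\]
The right-hand side would be controlled by weighted Hölder with exponents $n/2$ and $n/(n-2)$ and Lemma \ref{sob} applied to $\eta u^{p/2}$:
\[
\int \eta^2 u^p |A|^2\, d\mu_f \;\leq\; \|A\|_{n,f}^2\, S(n)^2 \int |\nabla(\eta u^{p/2})|^2\, d\mu_f.
\]
Expanding $|\nabla(\eta u^{p/2})|^2$, absorbing the $\nabla\eta\cdot\nabla u^{p/2}$ cross terms via Young's inequality, and invoking the smallness hypothesis $\|A\|_{n,f} < \tfrac{\sqrt{p-1}\,(n-1)}{p\,S(n)}$ to force the coefficient of $\int \eta^2 |\nabla u^{p/2}|^2\, d\mu_f$ to be strictly positive should produce
\[
\int \eta^2 |\nabla u^{p/2}|^2\, d\mu_f \;\leq\; \frac{C}{R^2}\!\int_{B_{2R}\setminus B_R}\! u^p\, d\mu_f.
\]

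A final application of Lemma \ref{sob} to $\eta u^{p/2}$ would then give
\[
\Bigl(\int_{B_R} u^{pn/(n-2)}\, d\mu_f\Bigr)^{(n-2)/n} \;\leq\; \frac{C'}{R^2}\!\int_{B_{2R}\setminus B_R}\! u^p\, d\mu_f,
\]
and since $\omega \in L_f^p$ the right-hand side tends to $0$ as $R\to\infty$; monotonicity in $R$ of the left-hand side then forces $u\equiv 0$, so $\omega\equiv 0$. The main obstacle will be the constant tracking in the Cauchy–Schwarz absorption: every $\varepsilon$-parameter in Young's inequality must be chosen to balance against the refined-Kato contribution $\tfrac{1}{n-1}$ in order to recover the stated threshold $\tfrac{\sqrt{p-1}\,(n-1)}{p\,S(n)}$; any suboptimal choice would shrink the admissible bound. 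The halfspace hypothesis on $M$ enters not through these constants but through the validity of Lemma \ref{sob} itself.
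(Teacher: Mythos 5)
Your proposal follows essentially the same route as the paper: weighted Bochner formula plus $\Ric_f=-A^2$ (your derivation of this via $\Hess f=-HA$ cancelling the Gauss-equation term is correct and more explicit than the paper, which just asserts it), test against $\eta^2|\omega|^{p-2}$, control the $|A|^2$ term by weighted H\"older and Lemma \ref{sob}, absorb via Young, and let the cutoff exhaust $M$. Two points of divergence are worth noting. First, you invoke the refined Kato inequality $|\nabla\omega|^2\ge\tfrac{n}{n-1}|\nabla|\omega||^2$; this is not justified here, because an $L^p_f$ \emph{weighted} harmonic $1$-form satisfies $\delta_f\omega=0$ rather than $\delta\omega=0$, so $\nabla\omega$ need not be trace-free and the standard proof of the refinement breaks down. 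The paper uses only the crude Kato inequality ($|\nabla\omega|^2\ge|\nabla|\omega||^2$, i.e.\ it simply drops that difference), and your own bookkeeping shows this suffices: the threshold $\|A\|_{n,f}<\tfrac{\sqrt{p-1}\,(n-1)}{pS(n)}$ comes exactly from $p-1-\tfrac{p^2}{4}\cdot\tfrac{4S(n)^2}{(n-1)^2}\|A\|_{n,f}^2>0$ with no contribution from the extra $\tfrac{1}{n-1}$, so you should delete the refinement rather than try to ``balance against'' it. Second, your endgame differs: the paper concludes $\nabla|\omega|=0$, hence $|\omega|$ is constant, and then kills it using the infinite weighted volume of $M$; you instead reapply the Sobolev inequality to $\eta|\omega|^{p/2}$ and let $R\to\infty$, which is arguably cleaner since it avoids the separate volume-growth input. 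Your explicit regularization of $|\omega|$ near its zero set is also a detail the paper glosses over. With the refined-Kato step removed, the argument is correct and delivers the stated constant.
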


This paper has four sections. Section 1 is used to derive some rigidity theorems. Then we prove a vanishing result for weighted harmonic forms in Section 3. Finally, we study translators in the Euclidean space with a Sobolev inequality in Section 4 and give another rigidity theorem. 
\section{Rigidity theorems}
 \quad Let $X:M \rightarrow \mathbb{R}^{n+m}$ be an $n$-dimensional translating soliton. $H,A,{\Phi}$ are the mean curvature vector, second fundamental form, trace-free second fundamental form of $M$, respectively. $V$ is the constant vector so that $V^N=H$. Let $f=-{\langle V, X\rangle}$, we define 
$$\begin{aligned}
\Delta_f&=\Delta+\langle V, \nabla(\cdot)\rangle=e^{-(V, X)} \operatorname{div}\left(e^{\langle V, X\rangle} \nabla(\cdot)\right)\\
&=e^{f}\operatorname{div}(e^{-f}\nabla(\cdot)).
\end{aligned}$$ 
Denote the trace-free second fundamental form $\Phi=A-\frac{1}{n}g\otimes H$. It is well-known that
$$|\Phi|^2=|A|^2-\frac{1}{n}|H|^2 \quad{\text{ and }}\quad |\nabla\Phi|^2=|\nabla A|^2-\frac{1}{n}\nabla|H|^2.$$
In order to prove our theorems, we need the following Simons type identity which has been obtained by Xin \cite{Xin15} (see also \cite{WXZ16}).
\begin{lemma}[\cite{IR17, WXZ16}]
 On a translating soliton $M^{n}$ in $\mathbb{R}^{n+m},$ we have
\begin{equation}\label{eq:eq1}
\Delta_f|{\Phi}|^{2} \geq 2|\nabla| {\Phi}||^{2}-\iota|{\Phi}|^{4}-\frac{2}{n}|H|^{2}|{\Phi}|^{2},
\end{equation}
where
$$
\iota=\left\{\begin{array}{ll}
2, & \text { if } \quad m=1 \\
4, & \text { if } \quad m \geq 2.
\end{array}\right.
$$
Moreover, when $m=1$, we have
\begin{equation}\label{e02}
\Delta_f|\Phi|^2=2|\nabla\Phi|^2-2|A|^2|\Phi|^2.
\end{equation} 
\end{lemma}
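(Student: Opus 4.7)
The plan is to derive \eqref{eq:eq1} and \eqref{e02} by combining the classical Simons identity for submanifolds of Euclidean space with the translator equation $H=V^{N}$, exploiting the elementary observation that for any smooth function $u$ on $M$ one has $\langle V^{T},\nabla u\rangle=\langle V,\nabla u\rangle$, which is precisely what converts the ordinary Laplacian into the drift operator $\Delta_f=\Delta+\langle V,\nabla\,\cdot\,\rangle$.

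First I would recall the standard Simons formula in orthonormal frames $\{e_i\}$ on $M$ and $\{\nu_\alpha\}$ on the normal bundle:
$$\tfrac{1}{2}\Delta|A|^{2}=|\nabla A|^{2}+h^{\alpha}_{ij}\nabla_i\nabla_j H^{\alpha}-\sum_{\alpha,\beta}\bigl(\mathrm{tr}(A^{\alpha}A^{\beta})\bigr)^{2}-\sum_{\alpha,\beta}\bigl|A^{\alpha}A^{\beta}-A^{\beta}A^{\alpha}\bigr|^{2}.$$
To handle the Hessian-of-mean-curvature term on a translator, I would differentiate $V=H+V^{T}$ using $\bar\nabla V=0$ together with the Gauss and Weingarten formulas, obtaining the first-order identities $\nabla^{\perp}_{e_i}H^{\alpha}=-h^{\alpha}_{ik}V^{T}_{k}$ and $(\nabla_{e_i}V^{T})_{k}=h^{\alpha}_{ik}H^{\alpha}$. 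Differentiating again, applying the Codazzi symmetry $\nabla_i h^{\alpha}_{jk}=\nabla_k h^{\alpha}_{ij}$ and tracing against $A$ yields
$$h^{\alpha}_{ij}\nabla_i\nabla_j H^{\alpha}=-\tfrac{1}{2}\langle V^{T},\nabla|A|^{2}\rangle-h^{\alpha}_{ij}h^{\alpha}_{jk}h^{\beta}_{ik}H^{\beta}.$$
Since $\nabla|A|^{2}$ is tangential, the first term equals $-\tfrac{1}{2}\langle V,\nabla|A|^{2}\rangle$, and moving it to the left-hand side of Simons converts $\tfrac12\Delta|A|^2$ into $\tfrac12\Delta_f|A|^2$.

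Next I would pass to the trace-free form using $A=\Phi+\tfrac{1}{n}g\otimes H$, so that $|A|^{2}=|\Phi|^{2}+\tfrac{1}{n}|H|^{2}$ and Kato's inequality gives $|\nabla A|^{2}\geq|\nabla\Phi|^{2}\geq|\nabla|\Phi||^{2}$. In codimension one the commutator term vanishes, $\sum_{\alpha,\beta}(\mathrm{tr}(A^{\alpha}A^{\beta}))^{2}=|A|^{4}$, and the remaining cubic-$H$ piece $h_{ij}h_{jk}h_{ik}H$ combines cleanly with the Simons quartic term to produce exactly \eqref{e02}; using $|A|^{2}|\Phi|^{2}=|\Phi|^{4}+\tfrac{1}{n}|H|^{2}|\Phi|^{2}$ then yields \eqref{eq:eq1} with $\iota=2$. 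For $m\geq 2$ I would invoke the sharp Li--Li estimate
$$\sum_{\alpha,\beta}\bigl(\mathrm{tr}(A^{\alpha}A^{\beta})\bigr)^{2}+\sum_{\alpha,\beta}\bigl|A^{\alpha}A^{\beta}-A^{\beta}A^{\alpha}\bigr|^{2}\leq\tfrac{3}{2}|A|^{4},$$
combined with a Cauchy--Schwarz bound on the mixed cubic-$H$ term, to obtain the stated inequality with $\iota=4$.

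The main obstacle is the careful algebraic repackaging of the quartic and cubic-$H$ curvature terms in higher codimension so that the coefficient of $|\Phi|^{4}$ sharpens to exactly $\iota=4$ while the coefficient of $|H|^{2}|\Phi|^{2}$ comes out as $\tfrac{2}{n}$; this is where the sharp Li--Li inequality is essential, since a naive Cauchy--Schwarz applied piecewise to each quartic summand would yield strictly worse constants and would prevent \eqref{eq:eq1} from being the clean bound that the rest of the paper (in particular Theorem \ref{th:th12}) relies on.
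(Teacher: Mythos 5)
The paper does not actually prove this lemma: it is imported verbatim from \cite{Xin15, WXZ16, IR17}, so there is no in-paper argument to compare against. Your sketch follows the standard derivation in those references --- Simons' identity, the translator identities $\nabla^{\perp}_{e_i}H^{\alpha}=-h^{\alpha}_{ik}V^{T}_{k}$ and $(\nabla_{e_i}V^{T})_{k}=h^{\alpha}_{ik}H^{\alpha}$ to convert the Hessian-of-$H$ term into the drift $-\tfrac12\langle V,\nabla|A|^2\rangle$, then Kato's inequality and a Li--Li type quartic estimate --- and in codimension one this route does close up to give \eqref{e02} exactly, hence \eqref{eq:eq1} with $\iota=2$. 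Two cautions. First, the Simons formula as you displayed it omits the cubic term $\sum_{\alpha,\beta}H^{\beta}h^{\beta}_{ik}h^{\alpha}_{kj}h^{\alpha}_{ij}$; it is precisely this term that cancels against the $-h^{\alpha}_{ij}h^{\alpha}_{jk}h^{\beta}_{ik}H^{\beta}$ produced by your Hessian identity, and without restoring it the identity \eqref{e02} would acquire a spurious $-2H\,\mathrm{tr}(A^{3})$ term rather than ``combining cleanly.'' Second, for $m\geq2$ the passage from the bound $\tfrac32|\Phi|^{4}$ on the trace-free quartic terms to the stated coefficient $\iota=4$ together with exactly $\tfrac{2}{n}|H|^{2}|\Phi|^{2}$ requires expanding $\mathrm{tr}(A^{\alpha}A^{\beta})=\mathrm{tr}(\Phi^{\alpha}\Phi^{\beta})+\tfrac{1}{n}H^{\alpha}H^{\beta}$ and tracking all the mixed $H$--$\Phi$ cross terms; you correctly identify this as the main obstacle but do not carry it out, so that constant chase remains an assertion to be checked against Lemma~1 of \cite{WXZ16} rather than a completed step.
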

We now recall that the following Sobolev inequality for submanifolds in the Euclidean is very helpful to derive our rigidity theorems (see \cite{XG07}).
\begin{lemma}[Sobolev's inequality]\label{1:1}
 Let $M^{n}\,(n \geq 3)$ be a complete submanifold in the Euclidean space $\mathbb{R}^{n+m} .$ Let $f$ be a nonnegative $C^{1}$ function with compact support. Then for all $s \in \mathbb{R}^{+},$ we have
$$
\|f\|_{\frac{2 n}{n-2}}^{2} \leq D^{2}(n)\left[\frac{4(n-1)^{2}(1+s)}{(n-2)^{2}}\|\nabla f\|_{2}^{2}+\left(1+\frac{1}{s}\right) \frac{1}{n^{2}}\||H| f\|_{2}^{2}\right],
$$
where $$D(n)=2^{n}(1+n)^{\frac{n+1}{n}}(n-1)^{-1} \sigma_{n}^{-\frac{1}{n}},$$ and $\sigma_{n}$ denotes the volume of the
unit ball in $\mathbb{R}^{n}$.
\end{lemma}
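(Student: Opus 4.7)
The plan is to derive a weighted Bochner-type integral inequality for $u:=|\omega|$ and combine it with the weighted Sobolev inequality of Lemma~\ref{sob} so that the smallness hypothesis forces $\omega$ to vanish. Let $\omega$ be a nontrivial $L^p_f$ weighted harmonic $1$-form, so that $\Delta_f\omega=0$. The weighted Bochner identity gives
\[
\tfrac{1}{2}\Delta_f|\omega|^2 = |\nabla\omega|^2 + \Ric_f(\omega,\omega),
\]
where $\Ric_f:=\Ric+\Hess(f)$. For a translator in $\mathbb{R}^{n+1}$ with $f=-\langle V,X\rangle$, a direct calculation yields $\nabla_i\nabla_j f = -H h_{ij}$, and combined with the Gauss equation $\Ric_{ij}=Hh_{ij}-(A^2)_{ij}$ this gives $(\Ric_f)_{ij}=-(A^2)_{ij}$, hence $\Ric_f(\omega,\omega)=-|A\omega|^2\geq -|A|^2|\omega|^2$. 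Applying the refined Kato inequality $|\nabla\omega|^2\geq \tfrac{n}{n-1}|\nabla|\omega||^2$ available for (weighted) harmonic $1$-forms, the pointwise bound
\[
u\,\Delta_f u \;\geq\; \frac{1}{n-1}|\nabla u|^2 - |A|^2 u^2
\]
follows.

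Next I would fix a smooth cutoff $\phi$ with compact support, multiply the inequality above by $\phi^2 u^{p-2}$ (with the standard regularization near $\{u=0\}$ since $p\geq 2$), integrate against $e^{-f}\,d\mu$, and integrate by parts using $\int g\,\Delta_f h\,e^{-f}\,d\mu=-\int\langle\nabla g,\nabla h\rangle e^{-f}\,d\mu$ to obtain
\[
\Bigl(p-1+\tfrac{1}{n-1}\Bigr)\!\int \phi^2 u^{p-2}|\nabla u|^2 e^{-f}\,d\mu \;\leq\; \int \phi^2 u^p |A|^2 e^{-f}\,d\mu - 2\!\int \phi u^{p-1}\langle\nabla u,\nabla\phi\rangle e^{-f}\,d\mu.
\]
To control the curvature term, set $v:=\phi u^{p/2}$, apply H\"older with exponents $(n/2,n/(n-2))$ and then the weighted Sobolev inequality of Lemma~\ref{sob} to get
\[
\int \phi^2 u^p|A|^2 e^{-f}\,d\mu \;\leq\; \|A\|_{n,f}^2\,S(n)^2\!\int|\nabla v|^2 e^{-f}\,d\mu,
\]
with $|\nabla v|^2=\tfrac{p^2}{4}\phi^2 u^{p-2}|\nabla u|^2+p\,\phi u^{p-1}\langle\nabla u,\nabla\phi\rangle+u^p|\nabla\phi|^2$.

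Substituting this into the previous display and absorbing the mixed term via Young's inequality with a suitable parameter, the hypothesis $\|A\|_{n,f}<\sqrt{p-1}(n-1)/(p\,S(n))$ ensures that the coefficient of $\int \phi^2 u^{p-2}|\nabla u|^2 e^{-f}\,d\mu$ on the left remains strictly positive, yielding
\[
c\!\int \phi^2 u^{p-2}|\nabla u|^2 e^{-f}\,d\mu \;\leq\; C\!\int u^p|\nabla\phi|^2 e^{-f}\,d\mu
\]
for some positive constants $c,C$. Choosing a Lipschitz exhaustion $\phi_R$ equal to $1$ on the intrinsic ball $B_R$, vanishing outside $B_{2R}$, with $|\nabla\phi_R|\leq 2/R$, and using $\omega\in L^p_f$, the right-hand side tends to $0$ as $R\to\infty$. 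Hence $\nabla u\equiv 0$ and $u$ is constant; since the halfspace constraint forces $e^{-f}=e^{\langle V,X\rangle}$ to be bounded below and $M$ is complete noncompact, the weighted volume is infinite, so the only constant in $L^p_f$ is $0$, giving $\omega\equiv 0$.

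The main technical obstacle is the balancing of constants in the absorption step: the refined Kato gain $1/(n-1)$, the factor $p^2/4$ coming from $|\nabla v|^2$, and the Young parameter governing the cross term must conspire so that the stated threshold $\sqrt{p-1}(n-1)/(p\,S(n))$ is exactly the one keeping the gradient coefficient positive. A secondary care point is justifying the refined Kato inequality in the weighted setting (where $\delta_f\omega=0$ replaces $\delta\omega=0$) and verifying the standard cutoff exhaustion produces vanishing right-hand side for all $p\geq 2$.
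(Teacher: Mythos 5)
Your proposal does not address the statement it is supposed to prove. The statement is Lemma \ref{1:1}, the Michael--Simon/Hoffman--Spruck type Sobolev inequality: for a complete submanifold $M^n\subset\mathbb{R}^{n+m}$ and any nonnegative compactly supported $C^1$ function $f$, one has $\|f\|_{2n/(n-2)}^2\leq D^2(n)\bigl[\tfrac{4(n-1)^2(1+s)}{(n-2)^2}\|\nabla f\|_2^2+(1+\tfrac{1}{s})\tfrac{1}{n^2}\||H|f\|_2^2\bigr]$ with the explicit constant $D(n)=2^n(1+n)^{(n+1)/n}(n-1)^{-1}\sigma_n^{-1/n}$. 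What you have written instead is an argument for Theorem \ref{harmonic}, the vanishing theorem for $L^p_f$ weighted harmonic $1$-forms: you start from a harmonic form $\omega$, invoke the weighted Bochner formula, a refined Kato inequality, and a cutoff exhaustion to conclude $\omega\equiv 0$. None of that machinery can produce the Sobolev inequality --- indeed your argument \emph{uses} a Sobolev inequality (Lemma \ref{sob}) as an input, so it is circular as a proof of a Sobolev-type statement and, more to the point, it is simply about a different object (harmonic forms rather than arbitrary compactly supported test functions).

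A genuine proof of Lemma \ref{1:1} goes through the isoperimetric/monotonicity argument for generalized submanifolds of $\mathbb{R}^{n+m}$: one first establishes the $L^1$ Sobolev inequality $\bigl(\int_M h^{n/(n-1)}\bigr)^{(n-1)/n}\leq C(n)\int_M\bigl(|\nabla h|+|H|h\bigr)$ via the Michael--Simon covering and monotonicity-of-density argument, and then applies it to $h=f^{2(n-1)/(n-2)}$ followed by H\"older and Young's inequality (the parameter $s$ arises precisely from splitting the cross term), which is how the constant $D(n)$ and the two coefficients in the bracket emerge. The paper itself does not reprove this; it quotes the result from the literature (\cite{HS74}, \cite{MS73}, \cite{XG07}). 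If your task was to supply a proof, you would need to reproduce or cite that submanifold isoperimetric argument; the Bochner--Kato--cutoff scheme you propose cannot be repaired into one.
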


For the proof of Theorem \ref{th:th12}, let $\rho=e^{\left\langle V, X\right\rangle}$, we can follow, verbatim,
the proof in \cite{WXZ16}, but using, instead of the function $f$ defined in Lemma 5 of
\cite{WXZ16}, the function $\varphi=|\Phi|^a\rho^{1/2}\eta$, where $a\geq1$ is a constant which possible values
will be determined later and $\eta$ is a smooth function
with compact support on $M$. For the convenience of the reader, in order to help him/her
checking the influence of the constant $a$ in every step, we give all the details of the computations.
\begin{lemma}
Assume that $|\Phi|\not=0$ on $M$. If $\eta$ be a smooth function
with compact support on $M$, then
\begin{equation}\label{eq:eq2}
\int_{M}|\nabla \varphi|^{2}=\int_{M}\left|\nabla\left(|{\Phi}|^{a} \eta\right)\right|^{2} \varrho-\frac{1}{2} \int_{M}|{\Phi}|^{2a} \eta^{2} \varrho+\frac{1}{4} \int_{M}|{\Phi}|^{2a}\left|V^{T}\right|^{2} \eta^{2} \varrho.
\end{equation}
\end{lemma}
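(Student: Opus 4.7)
The plan is to write $\varphi = u\,\rho^{1/2}$ with $u := |\Phi|^{a}\eta$, expand $|\nabla\varphi|^{2}$ pointwise, and eliminate the resulting cross term by an integration by parts. The two crucial auxiliary facts I will need are (i) the tangential gradient on $M$ of the linear function $\langle V,X\rangle$ equals $V^{T}$, hence $\nabla\rho = \rho\,V^{T}$, and (ii) the translator equation $V^{N}=H$ collapses $|V^{T}|^{2}+|H|^{2}$ to $1$, which will produce a very clean simplification at the end.

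First I would compute
$$\nabla\varphi \;=\; \rho^{1/2}\nabla u + \tfrac{1}{2}u\,\rho^{-1/2}\nabla\rho \;=\; \rho^{1/2}\bigl(\nabla u + \tfrac{1}{2}u\,V^{T}\bigr),$$
so that
$$|\nabla\varphi|^{2} \;=\; \rho|\nabla u|^{2} + u\rho\,\langle\nabla u,V^{T}\rangle + \tfrac{1}{4}u^{2}|V^{T}|^{2}\rho.$$
After integration over $M$, the first and third terms already match the first and third summands of the claimed identity (with $u=|\Phi|^{a}\eta$); everything reduces to evaluating the cross term $\int_{M} u\rho\,\langle\nabla u,V^{T}\rangle = \tfrac{1}{2}\int_{M}\langle\nabla u^{2},V^{T}\rangle\rho$.

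For this, since $u^{2}\rho V^{T}$ is compactly supported, the divergence theorem gives
$$0 \;=\; \int_{M}\operatorname{div}(u^{2}\rho V^{T}) \;=\; \int_{M}\rho\,\langle\nabla u^{2},V^{T}\rangle + \int_{M}u^{2}\langle\nabla\rho,V^{T}\rangle + \int_{M}u^{2}\rho\,\operatorname{div}(V^{T}).$$
Now $\langle\nabla\rho,V^{T}\rangle = \rho|V^{T}|^{2}$, and using $\Delta_{M}X = H$ together with $V^{N}=H$,
$$\operatorname{div}(V^{T}) \;=\; \Delta_{M}\langle V,X\rangle \;=\; \langle V,H\rangle \;=\; |H|^{2}.$$
Because $|V|=1$, one has $|V^{T}|^{2}+|H|^{2}=1$, so the last two integrals collapse into $\int_{M}u^{2}\rho$ and we obtain $\int_{M}\langle\nabla u^{2},V^{T}\rangle\rho = -\int_{M}u^{2}\rho$. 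The cross term in the expansion therefore contributes $-\tfrac{1}{2}\int_{M}u^{2}\rho$, which is exactly the second summand in the lemma. The only delicate point — and hence the main (mild) obstacle — is the intrinsic/extrinsic bookkeeping: identifying $\nabla\rho|_{M}$ with $\rho V^{T}$, recognizing $\operatorname{div}(V^{T}) = |H|^{2}$ via the immersion Laplacian of the position vector, and noticing that the translator relation is precisely what erases the $|H|^{2}$ term so no residual curvature quantity survives.
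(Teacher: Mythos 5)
Your argument is correct and is essentially the paper's proof: both expand $|\nabla(u\varrho^{1/2})|^2$ into the same three terms and dispose of the cross term by one integration by parts, using $\nabla\varrho=\varrho V^{T}$, $\operatorname{div}(V^{T})=\langle V,H\rangle=|H|^{2}$ and $|V^{T}|^{2}+|H|^{2}=1$. The paper merely packages the same computation as $\Delta\varrho=\operatorname{div}(\varrho V^{T})=\varrho$, which is exactly your divergence-theorem step.
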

\begin{proof}
Integrating by parts, we have 
$$\begin{aligned} \int_{M}|\nabla \varphi|^{2} &=\int_{M}\left|\nabla\left(|{\Phi}|^{a} \eta\right)\right|^{2} \varrho+\frac{1}{2} \int_{M} \nabla\left(|{\Phi}|^{2a} \eta^{2}\right) \nabla \varrho+\int_{M}|{\Phi}|^{2a} \eta^{2}\left|\nabla \varrho^{\frac{1}{2}}\right|^{2} \\ &=\int_{M}\left|\nabla\left(|{\Phi}|^{a} \eta\right)\right|^{2} \varrho-\frac{1}{2} \int_{M}|{\Phi}|^{2a} \eta^{2} \Delta \varrho+\int_{M}|{\Phi}|^{2a} \eta^{2}\left|\nabla \varrho^{\frac{1}{2}}\right|^{2}. \end{aligned}$$
By $M^n$ is a translating soliton, we have 
$$\nabla \varrho=\nabla e^{\langle V, X\rangle}=\varrho V^{T},$$
$$\nabla \varrho^{\frac{1}{2}}=\frac{1}{2} \varrho^{-\frac{1}{2}} \nabla \varrho=\frac{1}{2} \varrho^{\frac{1}{2}} V^{T},$$
and
$$\Delta \varrho=\sum_{i} \nabla_{i} \varrho\left\langle V, e_{i}\right\rangle+\sum_{i} \varrho\left\langle V, \nabla_{i} e_{i}\right\rangle=\varrho\left(\left|V^{T}\right|^{2}+\left|H\right|^{2}\right)=\varrho\left(\left|V^{T}\right|^{2}+\left|V^{N}\right|^{2}\right)=\varrho.$$
Using them, we get
$$\int_{M}|\nabla \varphi|^{2}=\int_{M}\left|\nabla\left(|{\Phi}|^{a} \eta\right)\right|^{2} \varrho-\frac{1}{2} \int_{M}|{\Phi}|^{2a} \eta^{2} \varrho+\frac{1}{4} \int_{M}|{\Phi}|^{2a}\left|V^{T}\right|^{2} \eta^{2} \varrho.$$
\end{proof}
Now, combining the Sobolev's inequality in Lemma \ref{1:1} and \eqref{eq:eq2}, we get 
$$\begin{aligned}
&\left(\int_{M}|\varphi|^{\frac{2 n}{n-2}}\right)^{\frac{n-2}{n}} \\
&\leq  D^{2}(n) \cdot\left\{\frac{4(n-1)^{2}(1+s)}{(n-2)^{2}} \int_{M}|\nabla \varphi|^{2}+\left(1+\frac{1}{s}\right) \cdot \frac{1}{n^{2}} \int_{M}|H|^{2} \varphi^{2}\right\} \\
&= D^{2}(n) \cdot\left\{\frac{4(n-1)^{2}(1+s)}{(n-2)^{2}}\left(\int_{M}\left|\nabla\left(|{\Phi}|^{a} \eta\right)\right|^{2} \varrho-\frac{1}{2} \int_{M}|{\Phi}|^{2a} \eta^{2} \varrho\right.\right.\\
&\quad\left.\left.+\frac{1}{4} \int_{M}|{\Phi}|^{2a}\left|V^{T}\right|^{2} \eta^{2} \varrho\right)+\left(1+\frac{1}{s}\right) \cdot \frac{1}{n^{2}} \int_{M}|{\Phi}|^{2a}|H|^{2} \eta^{2} \varrho\right\}.
\end{aligned}$$
Note that
$$|V^T|^2 + |V^N|^2=|V^T|^2+|H|^2=1.$$
Thus, we obtain
\begin{equation} 
\begin{aligned}
&\left(\int_{M}|\varphi|^{\frac{2 n}{n-2}}\right)^{\frac{n-2}{n}} \\
&\leq  D^{2}(n) \cdot\left\{\frac{4(n-1)^{2}(1+s)}{(n-2)^{2}}\left(\int_{M}\left|\nabla\left(|{\Phi}|^{a} \eta\right)\right|^{2} \varrho-\frac{1}{4} \int_{M}|{\Phi}|^{2a}\left|V^{T}\right|^{2} \eta^{2} \varrho\right.\right.\\
&\quad\left.\left.-\frac{1}{2} \int_{M}|{\Phi}|^{2a}|H|^{2} \eta^{2} \varrho\right)+\left(1+\frac{1}{s}\right) \cdot \frac{1}{n^{2}} \int_{M}|{\Phi}|^{2a}|H|^{2} \eta^{2} \varrho\right\} \\
&= D^{2}(n) \cdot\left\{\frac{4(n-1)^{2}(1+s)}{(n-2)^{2}}\left(\int_{M} a^2|\nabla| {\Phi}||^{2}|{\Phi}|^{2a-2} \eta^{2} \varrho\right.\right.\\
&\quad+\int_{M} 2a|{\Phi}|^{2a-1} \eta \nabla|{\Phi}| \cdot \nabla \eta \varrho+\int_{M}|{\Phi}|^{2a}|\nabla \eta|^{2} \varrho-\frac{1}{4} \int_{M}|{\Phi}|^{2a}\left|V^{T}\right|^{2} \eta^{2} \varrho \\
&\quad\left.\left.-\frac{1}{2} \int_{M}|{\Phi}|^{2a}|H|^{2} \eta^{2} \varrho\right)+\left(1+\frac{1}{s}\right) \cdot \frac{1}{n^{2}} \int_{M}|{\Phi}|^{2a}|H|^{2} \eta^{2} \varrho\right\}.
\end{aligned}
\end{equation}
By the Cauchy inequality, for $\delta >0$ we have 
\begin{equation}\label{eq:eq5}
\begin{aligned}
&\left(\int_{M}|\varphi|^{\frac{2 n}{n-2}}\right)^{\frac{n-2}{n}} \\
&\leq  \frac{4 D^{2}(n)(n-1)^{2}(1+s)}{(n-2)^{2}}\left\{(1+\delta) a^2 \int_{M}|\nabla| {\Phi} \|^{2}|{\Phi}|^{2a-2} \eta^{2} \varrho\right.\\
&\quad\left.+\left(1+\frac{1}{\delta}\right) \int_{M}|{\Phi}|^{2a}|\nabla \eta|^{2} \varrho-\frac{1}{4} \int_{M}|{\Phi}|^{2a}\left|V^{T}\right|^{2} \eta^{2} \varrho-\frac{1}{2} \int_{M}|{\Phi}|^{2a}|H|^{2} \eta^{2} \varrho\right\} \\
&\quad+D^{2}(n)\left(1+\frac{1}{s}\right) \cdot \frac{1}{n^{2}} \int_{M}|{\Phi}|^{2a}|H|^{2} \eta^{2} \varrho.
\end{aligned}
\end{equation}
In order to estimate the term $\int_{M} |\nabla| {\Phi}||^{2}|{\Phi}|^{2a-2} \eta^{2} \varrho$, we multiply $|{\Phi}|^{2a-2} \eta^{2}$ on both sides of \eqref{eq:eq1} and integrating by parts
with respect to the measure $\varrho d\mu$ on $M$,
\begin{equation}\label{eq:eq3}
\begin{aligned}
0 &\geq 2 \int_{M}|\nabla| {\Phi}||^{2}|{\Phi}|^{2a-2} \eta^{2} \varrho-\iota \int_{M}|{\Phi}|^{2a+2} \eta^{2} \varrho-\frac{2}{n} \int_{M}|{\Phi}|^{2a}|H|^{2} \eta^{2} \varrho \\
&\quad-\int_{M}|{\Phi}|^{2a-2} \eta^{2} \Delta_f|{\Phi}|^{2} \varrho.
\end{aligned}
\end{equation}
Since $\eta$ has compact support on $M$, by the Stokes theorem, we obtain
\begin{equation}\label{eq:eq4}
\begin{aligned}
&-\int_{M}|{\Phi}|^{2a-2} \eta^{2} \Delta_f|{\Phi}|^{2} \varrho \\
&=-\int_{M}|{\Phi}|^{2a-2} \eta^{2} \operatorname{div}\left(\varrho \cdot \nabla|{\Phi}|^{2}\right) \\
&= 2 \int_{M} \varrho|{\Phi}| \nabla|{\Phi}| \cdot \nabla\left(|{\Phi}|^{2a-2} \eta^{2}\right) \\
&= 4(a-1) \int_{M}|\nabla| {\Phi} \|^{2}|{\Phi}|^{2a-2} \eta^{2} \varrho+4 \int_{M}(\nabla|{\Phi}| \cdot \nabla \eta)|{\Phi}|^{2a-1} \eta \varrho.
\end{aligned}
\end{equation}
Combining \eqref{eq:eq3} and \eqref{eq:eq4}, we get 
$$\begin{aligned}
0 &\geq  4\left(a-\frac{1}{2}\right) \int_{M}|\nabla| {\Phi}||^{2}|{\Phi}|^{2a-2} \eta^{2} \varrho-\iota \int_{M}|{\Phi}|^{2a+2} \eta^{2} \varrho-\frac{2}{n} \int_{M}|{\Phi}|^{2a}|H|^{2} \eta^{2} \varrho \\
&\quad+4 \int_{M}(\nabla|{\Phi}| \cdot \nabla \eta)|{\Phi}|^{2a-1} \eta \varrho.
\end{aligned}$$
By the Cauchy inequality, for $0< \varepsilon < a-\frac{1}{2},$ we have 
\begin{equation}\label{eq:eq6}
\begin{aligned}
\iota \int_{M}|{\Phi}|^{2a+2} \eta^{2} \varrho+\frac{2}{n} \int_{M}|{\Phi}|^{2a}|H|^{2} \eta^{2} \varrho+\frac{1}{\varepsilon} \int_{M}|{\Phi}|^{2a}|\nabla \eta|^{2} \varrho \\
\geq 4\left(a-\frac{1}{2}-\varepsilon\right) \int_{M}|\nabla| {\Phi}||^{2}|{\Phi}|^{2a-2} \eta^{2} \varrho.
\end{aligned}
\end{equation}
Substituting \eqref{eq:eq6} into \eqref{eq:eq5}, we get
\begin{equation}
\begin{aligned}
&\left(\int_{M}|\varphi|^{\frac{2 n}{n-2}}\right)^{\frac{n-2}{n}} \\
 &\leq \frac{4 D^{2}(n)(n-1)^{2}(1+s)}{(n-2)^{2}}\left\{\frac{a^2(1+\delta)}{4\left(a-\frac{1}{2}-\varepsilon\right)}\left(\iota \int_{M}|{\Phi}|^{2a+2} \eta^{2} \varrho\right.\right.\\
&\quad\left.+\frac{2}{n} \int_{M}|{\Phi}|^{2a}|H|^{2} \eta^{2} \varrho+\frac{1}{\varepsilon} \int_{M}|{\Phi}|^{2a}|\nabla \eta|^{2} \varrho\right) \\
&\quad\left.+\left(1+\frac{1}{\delta}\right) \int_{M}|{\Phi}|^{2a}|\nabla \eta|^{2} \varrho-\frac{1}{2} \int_{M}|{\Phi}|^{2a}|H|^{2} \eta^{2} \varrho\right\} \\
&\quad+D^{2}(n)\left(1+\frac{1}{s}\right) \cdot \frac{1}{n^{2}} \int_{M}|{\Phi}|^{2a}|H|^{2} \eta^{2} \varrho.
\end{aligned}
\end{equation}
We want to get rid of the term $\int_{M}|{\Phi}|^{2a}|H|^{2} \eta^{2} \varrho$ by choosing $\delta>0$ appropriately.
Put 
$$\delta= \delta(n,\varepsilon,a)= \frac{\left(2(n-1)^2 n^2 s - (n-2)^2 \right)\left(a-\frac{1}{2}-\varepsilon \right)}{2(n-1)^2a^2ns}-1.$$
We would require $\delta >0$, this occurs only if $s$ satisfies 
\begin{equation}\label{eq:eq8}
s>\frac{(n-2)^2\left(a-\frac{1}{2}-\varepsilon\right)}{2(n-1)^2 n \left(na-\frac{n}{2}-a^2-n\epsilon \right)}
\end{equation}
for some $\varepsilon \in \left(0,a-\frac{1}{2}-\frac{a^2}{n} \right)$ defined later and also, we need $$1\leq a<\frac{n+\sqrt{n^2-2n}}{2}.$$ Consequently, we have
\begin{equation}\label{eq:eq7}
\begin{aligned}
& \kappa^{-1}\left(\int_{M}|\varphi|^{\frac{2 n}{n-2}}\right)^{\frac{n-2}{n}} \\
 &\leq \frac{a^{2}(1+s)(1+\delta)}{4\left(a-\frac{1}{2}-\varepsilon\right)}\left(\iota \int_{M}|{\Phi}|^{2a+2} \eta^{2} \varrho+\frac{1}{\varepsilon} \int_{M}|{\Phi}|^{2a}|\nabla \eta|^{2} \varrho\right) \\
&\quad+(1+s)\left(1+\frac{1}{\delta}\right) \int_{M}|{\Phi}|^{2a}|\nabla \eta|^{2} \varrho \\
&= \frac{(1+s) \iota \left[2 s n^{2}(n-1)^{2}-(n-2)^{2}\right]}{8 s n(n-1)^{2}} \int_{M}|{\Phi}|^{2a+2} \eta^{2} \varrho \\
&\quad+C(s, \varepsilon, n,a) \int_{M}|{\Phi}|^{2a}|\nabla \eta|^{2} \varrho,
\end{aligned}
\end{equation}
where $C(s,\varepsilon,n,a)$ is an explicit positive constant depending on $s,\varepsilon,n,a$ and 
$$\kappa=\frac{4D^2(n)(n-1)^2}{(n-2)^2}.$$
By the H\"{o}lder inequality, we have 
\begin{equation}
\begin{aligned}
\int_{M}|{\Phi}|^{2a+2} \eta^{2} \varrho 
& \leq\left(\int_{M}(|{\Phi}|^{2\cdot\frac{n}{2}}\right)^{\frac{2}{n}} \cdot\left(\int_{M}\left(|{\Phi}|^{2a} \eta^{2} \varrho\right)^{\frac{n}{n-2}}\right)^{\frac{n-2}{n}}\\
 & = \left(\int_{M}|{\Phi}|^{2 \cdot \frac{n}{2}}\right)^{\frac{2}{n}} \cdot\left(\int_{M}\left|\varphi\right|^{\frac{2n}{n-2}}\right)^{\frac{n-2}{n}}.
\end{aligned}
\end{equation}
Applying this to \eqref{eq:eq7}, we have 
\begin{equation}\label{eq:eq9}
\begin{aligned}
& \kappa^{-1}\left(\int_{M}|\varphi|^{\frac{2 n}{n-2}}\right)^{\frac{n-2}{n}} \\
&\leq  \frac{(1+s) \iota\left[2 s n^{2}(n-1)^{2}-(n-2)^{2}\right]}{8 s n(n-1)^{2}}\left(\int_{M}|{\Phi}|^{2a}\right)^{\frac{2}{n}} \cdot\left(\int_{M}|\varphi|^{\frac{2 n}{n-2}}\right)^{\frac{n-2}{n}} \\
&\quad+C(s, \varepsilon, n,a) \int_{M}|{\Phi}|^{2a}|\nabla \eta|^{2} \varrho.
\end{aligned}
\end{equation}
Put 
$$K(n, s)=\sqrt{\frac{8 s n(n-1)^{2}}{(1+s) \iota\left[2 s n^{2}(n-1)^{2}-(n-2)^{2}\right] \kappa}}.$$
By the condition \eqref{eq:eq8}, we can choose
$$s=\frac{(n-2)^2\left(a-\frac{1}{2}\right)}{2(n-1)^2 n \left(na-\frac{n}{2}-a^2-n\varepsilon \right)}.$$
Hence, substituting $s$ into $K(n,s)$, we have
\begin{equation}
K(n,a,\varepsilon )=K(n,s(a,\varepsilon ))=\sqrt{\frac{{{(n-2)}^{2}}\left( a-\frac{1}{2} \right)}{{{D}^{2}}(n)\left[ \frac{{{(n-2)}^{2}}\left( a-\frac{1}{2} \right)}{2n\left( na-\frac{n}{2}-{{a}^{2}}-n\varepsilon  \right)}+{{(n-1)}^{2}} \right]\iota (n\varepsilon +{{a}^{2}})}}.
\end{equation}
Set
$$K(n,a)=\underset{\varepsilon \in \left( 0,\frac{(n-a-1)(a-1)}{n} \right)}{\mathop{\sup }}\,K(n,a,\varepsilon )=\sqrt{\frac{{{(n-2)}^{2}}\left( a-\frac{1}{2} \right)}{{{D}^{2}}(n)\left[ \frac{{{(n-2)}^{2}}\left( a-\frac{1}{2} \right)}{2n\left( na-\frac{n}{2}-{{a}^{2}} \right)}+{{(n-1)}^{2}} \right]\iota {{a}^{2}}}}.$$
We now can give a proof of Theorem \ref{th:th12}.
\begin{proof}[Proof of Theorem \ref{th:th12}]
Since we have the assumption
$\left(\int_{M}|{\Phi}|^{n} \mathrm{d} \mu\right)^{1 / n}<K(n,a),$
there exists a positive constant $\check{K}$ such that
\begin{equation}\label{eq:eq10}
\left(\int_{M}|{\Phi}|^{n} \mathrm{d} \mu\right)^{1 / n}<\check{K}<K(n,a).
\end{equation}
Thus, there exists $\varepsilon = \varepsilon_0 > 0$ such that
$$\check{K}<K\left(n, a,\varepsilon_{0}\right)<K(n,a).$$
Using this and combining \eqref{eq:eq9}, \eqref{eq:eq10}, there exists $0<\epsilon<1$ such that 
\begin{equation}
\begin{aligned}
& \kappa^{-1}\left(\int_{M}|f|^{\frac{2 n}{n-2}}\right)^{\frac{n-2}{n}} \\
&\leq  \kappa^{-1} \cdot K\left(n,a, \varepsilon_{0}\right)^{-2} \cdot \check{K}^{2}\left(\int_{M}|f|^{\frac{2 n}{n-2}}\right)^{\frac{n-2}{n}}+\bar{C}\left(n, a,\varepsilon_{0}\right) \int_{M}|{\Phi}|^{2a}|\nabla \eta|^{2} \varrho \\
&\leq  \frac{1-\epsilon}{\kappa}\left(\int_{M}|f|^{\frac{2 n}{n-2}}\right)^{\frac{n-2}{n}}+\bar{C}\left(n,a, \varepsilon_{0} \right) \int_{M}|{\Phi}|^{2a}|\nabla \eta|^{2} \varrho
\end{aligned}
\end{equation}
or equivalently 
\begin{equation}\label{eq:eq11}
\frac{\epsilon}{\kappa}\left(\int_{M}|f|^{\frac{2 n}{n-2}}\right)^{\frac{n-2}{n}} \leq \bar{C}\left(n,a, \varepsilon_{0}\right) \int_{M}|{\Phi}|^{2a}|\nabla \eta|^{2} \varrho.
\end{equation}
Let $$\eta(X)=\eta_{r}(X)=\phi\left(\frac{|X| }{r}\right)$$ for any $r>0$, where $\phi$ is a non-negative smooth function on $[0,+\infty)$ satisfying 
\begin{equation}
\phi(x)=\left\{\begin{array}{ll}
1, & \text { if } \quad x \in[0,1) \\
0, & \text { if } \quad x \in[2,+\infty)
\end{array}\right.
\end{equation} 
and $|\phi'| \leq C$ for some absolute constant. Since $\int_{M}|{\Phi}|^{q} \varrho$ and $\bar{C}(n,a,\varepsilon_0)$ are bounded, then the right hand side of \eqref{eq:eq11} approaches to zero as $r \to \infty$, which implies the left hand side equal to zero or namely, $|{\Phi}| \equiv 0$. 

Finally, using the assertion that $|\Phi|=0$, it was confirmed in \cite{WXZ16} that $M$ is a linear subspace. In the rest of the proof, we give a detail arguments which is inspired by Impera and Rimoldi in \cite{IR17}. We argue as follows. Since $|\Phi|=0$, it turns out that $|A|^2=\frac{1}{n}H^2$. Moreover, we note that $|\nabla \Phi|^2=|\nabla|\Phi||^2=0$. This implies $$0=|\nabla\Phi|^2=|\nabla A|^2-\frac{1}{n}|\nabla H|^2.$$ Therefore, we get
\begin{align}
\left| \nabla {{\left| A \right|}^{2}} \right|=\frac{1}{n}\left| \nabla {{H}^{2}} \right|&=\frac{2}{n}\left| H \right|\left| \nabla H \right|\nonumber\\
&=\frac{2}{n}\left( \sqrt{n}\left| A \right| \right)\left( \sqrt{n}\left| \nabla A \right| \right)=2\left| A \right|\left| \nabla A \right|.\nonumber
\end{align}
As a consequence,  $|\nabla A|=|\nabla|A||$. Therefore, we can apply the argument in the proof of Theorem A in \cite{IR17} to conclude that $M$ is a linear subspace. The proof is complete.
\end{proof}
Observe that $[1, n-1]\subset \left[1, \frac{n+\ \sqrt[]{n^2-2n}}{2}\right)$, the weighted $L^{2a}$ norm of $|\Phi|$ in our theorem is wider than those in \cite{WXZ16}. Moreover, when $a=\frac{n}{2}$, our theorem recovers the following rigidity property which was obtained by Wang \textit{et. all} in \cite{WXZ16}.
\begin{theorem}
Let $M^n (n \geq 3)$ be a smooth complete translating soliton in
the Euclidean space $\mathbb{R}^{n+m}$. If the trace-free second fundamental form ${\Phi}$ of $M$
satisfies
$$\left(\int_{M}|{\Phi}|^{n} \mathrm{d} \mu\right)^{1 / n}<K(n) \quad {\text{and}}\quad \int_{M}|{\Phi}|^{n} e^{(V, X)}<\infty$$
where $K(n)$ is defined as above, then $M$ is a
linear subspace.
\end{theorem}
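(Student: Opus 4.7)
The plan is to deduce this theorem as a direct corollary of Theorem \ref{th:th12} by specializing the parameter $a$ to the value $n/2$. The first step is to verify the admissibility of this choice: one checks that
$$1 \leq \frac{n}{2} < \frac{n+\sqrt{n^2-2n}}{2},$$
where the left inequality holds for $n \geq 2$ and the right inequality is equivalent to $\sqrt{n^2-2n} > 0$, i.e.\ $n > 2$. Both are satisfied under the hypothesis $n \geq 3$, so $a = n/2$ lies in the allowed range for Theorem \ref{th:th12}.

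Next, I would match the hypotheses. With $a = n/2$, the weighted integrability assumption of Theorem \ref{th:th12} reads
$$\int_M |\Phi|^{2a} e^{\langle V, X\rangle} \, d\mu = \int_M |\Phi|^n e^{\langle V, X\rangle}\, d\mu < \infty,$$
which is exactly the second assumption of the present statement. The smallness assumption $\left(\int_M |\Phi|^n\, d\mu\right)^{1/n} < K(n)$ likewise coincides with the condition of Theorem \ref{th:th12}, provided we define $K(n) := K(n, n/2)$. A short computation confirms that substituting $a = n/2$ into
$$K(n,a)=\sqrt{\frac{(n-2)^{2}\left(a-\tfrac{1}{2}\right)}{D^{2}(n)\left[\frac{(n-2)^{2}\left(a-\tfrac{1}{2}\right)}{2n\left(na-\tfrac{n}{2}-a^{2}\right)}+(n-1)^{2}\right]\iota \,a^{2}}}$$
yields the constant $K(n)$ stated in \cite{WXZ16}; for instance, the quantity $na - \tfrac{n}{2} - a^2$ becomes $n(n-2)/4$ at $a = n/2$, so that the bracketed denominator simplifies to $(n-1)\left[(n-2)/n^2 + (n-1)\right]$, giving an explicit expression depending only on $n$.

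Once the hypotheses are matched, Theorem \ref{th:th12} applies verbatim and yields that $M$ is a linear subspace. The proof is therefore a one-line invocation of the more general result; no further work is required. The only subtlety worth pointing out explicitly is the admissibility check for $a = n/2$, which is where the dimensional restriction $n \geq 3$ enters naturally (at $n=2$ the upper endpoint of the admissible interval collapses to $1 = a$, so the statement degenerates). No step of the argument presents a genuine obstacle; this theorem is recorded mainly to highlight that the $a = n/2$ specialization of Theorem \ref{th:th12} reproduces the previously known rigidity result.
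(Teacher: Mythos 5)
Your proposal is correct and matches the paper exactly: the paper offers no separate proof of this statement, presenting it as the $a=\tfrac{n}{2}$ specialization of Theorem \ref{th:th12}, which is precisely what you do, and your admissibility check and the simplification $na-\tfrac{n}{2}-a^{2}=\tfrac{n(n-2)}{4}$ are both accurate. Nothing further is needed.
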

It is worth to mention that the above condition is weaker than that in the rigidity theorem of Xin \cite{Xin15}. Now, to derive another rigidity result,  we can use the following version of Sobolev inequality. 
\begin{lemma}[\cite{IR17}]\label{sob}
Let $X: M^{n} \rightarrow \mathbb{R}^{n+1}$ be a translator contained in the halfspace $$\Pi_{v, a}=\left\{p \in \mathbb{R}^{n+1}:\langle p, v\rangle \geq a\right\}$$ for some $a \in \mathbb{R}$ and $n\geq3$. Let $u$ be a non-negative compactly supported $C^{1}$ function on $M$. Then
\begin{equation}\label{eq:eq17}
\left[\int_{M} u^{\frac{2n}{n-2}} \varrho d \mu\right]^{\frac{n-2}{n}} \leq \left(\frac{2(n-1)S(n)}{n-2}\right)^2 \int_{M}|\nabla u|^2 \varrho d \mu
\end{equation}
where $S(n)$ is the Sobolev constant given in Lemma 4.2 in \cite{IR17}.
\end{lemma}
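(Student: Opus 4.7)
The strategy I would follow is to exploit Smoczyk's correspondence identifying a translator $X : M \to \mathbb{R}^{n+1}$ with a minimal hypersurface in a conformally modified ambient manifold, and then to invoke a Michael--Simon/Hoffman--Spruck type Sobolev inequality for the resulting minimal submanifold. The first step is to rewrite the translator equation $H = V^N$ as a minimality condition in $(\mathbb{R}^{n+1}, \tilde g)$, where $\tilde g$ is obtained from the Euclidean metric by an exponential conformal factor depending on $\langle V, X \rangle$. Under this identification the weighted volume form $\varrho\, d\mu$ on $M$ appearing in \eqref{eq:eq17} is, up to a constant the halfspace hypothesis keeps controlled, the Riemannian volume of $M$ in the modified ambient.

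The second step is to apply the Sobolev inequality for minimal hypersurfaces in this modified ambient, which is the content of Lemma 4.2 of \cite{IR17}. Starting from its $L^{n/(n-1)}$ form
$$\left(\int_M u^{n/(n-1)}\, d\tilde\mu\right)^{(n-1)/n} \leq S(n)\int_M |\tilde\nabla u|_{\tilde g}\, d\tilde\mu,$$
valid for compactly supported nonnegative $C^1$ test functions, I would substitute $u = v^{2(n-1)/(n-2)}$ and use H\"older's inequality to upgrade it to the $L^2$ form
$$\left(\int_M v^{2n/(n-2)}\, d\tilde\mu\right)^{(n-2)/n} \leq \left(\frac{2(n-1)S(n)}{n-2}\right)^2 \int_M |\tilde\nabla v|_{\tilde g}^2\, d\tilde\mu,$$
producing precisely the constant appearing on the right-hand side of \eqref{eq:eq17}. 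The manipulation that brings out the factor $\frac{2(n-1)}{n-2}$ is the standard one: differentiating the power of $v$ produces the factor $\frac{2(n-1)}{n-2}$ twice, once from the chain rule on $\nabla u$ and once from $(n-2)/n$-th powers on the left.

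The final step is to convert the $\tilde g$-gradient back into the Euclidean gradient and interpret the resulting integrals in terms of $\varrho = e^{\langle V, X\rangle}$; the halfspace assumption $M \subset \Pi_{v,a}$ is used here to absorb the conformal factors into $\varrho$ so that the right-hand side becomes $\int_M |\nabla v|^2 \varrho\, d\mu$. The principal obstacle in such a plan is not the algebraic $L^1 \to L^2$ Sobolev upgrade but rather the verification of the Michael--Simon inequality itself in the conformally/warped ambient setting, since the classical Michael--Simon theorem is stated for Euclidean ambients; this is exactly where the halfspace condition is essential, by ensuring the Smoczyk identification produces a minimal hypersurface in an ambient of controlled geometry. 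Granted Lemma 4.2 of \cite{IR17}, the derivation of \eqref{eq:eq17} reduces to routine substitution and a change of variables back to Euclidean quantities.
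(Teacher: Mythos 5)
The paper offers no proof of this lemma: it is imported wholesale from \cite{IR17} (the constant $S(n)$ is even defined only by reference to Lemma 4.2 there), so there is no internal argument to compare yours against. Your sketch reproduces the strategy of Impera--Rimoldi exactly as the present authors describe it in their introduction (Smoczyk's correspondence turning the translator into a minimal hypersurface in a modified ambient, then a Michael--Simon/Hoffman--Spruck inequality there), and your $L^1\to L^2$ upgrade is the standard one and does yield the stated constant: with $u=v^{2(n-1)/(n-2)}$ one gets $u^{n/(n-1)}=v^{2n/(n-2)}$ and $|\nabla u|=\tfrac{2(n-1)}{n-2}v^{n/(n-2)}|\nabla v|$, and Cauchy--Schwarz plus the exponent identity $\tfrac{n-1}{n}-\tfrac12=\tfrac{n-2}{2n}$ gives $\bigl(\tfrac{2(n-1)S(n)}{n-2}\bigr)^2$ after squaring. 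Two cautions. First, the entire analytic content lives in the $L^1$ inequality you take as given; your stated use of the halfspace hypothesis (``to absorb the conformal factors into $\varrho$'') is not its actual role --- it is needed to verify the geometric hypotheses (curvature sign and the Hoffman--Spruck smallness/injectivity condition) of the Sobolev inequality in the warped or conformal ambient, and this is precisely the delicate point. Second, be aware that the authors themselves note in Section 4 that the closely related inequality \eqref{sobolev} from \cite{IR17}, obtained from the dimension-$(n+1)$ warped-product picture, has a gap in its proof as acknowledged in \cite{IR19}; the version in Lemma \ref{sob}, with intrinsic exponent $\tfrac{2n}{n-2}$, is the one being relied upon, so if you flesh out your sketch you must make sure the minimal object you produce is $n$-dimensional (the conformal/Ilmanen picture) rather than the $(n+1)$-dimensional cylinder of the Smoczyk correspondence, which would give the wrong exponents.
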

Repeating the same computation as above, we can give a verification of Theorem \ref{main2} as follows. 
\begin{proof}[Proof of Theorem \ref{main2}]
Applying the Sobolev's inequalities \eqref{eq:eq17} to $u=|{\Phi}|^{\frac{n}{2}} \eta$ and using the Cauchy inequality, we have
$$\begin{aligned}
\left[\int_{M} \left(|{\Phi}|^{\frac{n}{2}} \eta \right)^{\frac{2n}{n-2}} \rho \right]^{\frac{n-2}{n}} 
 &\leq \left(\frac{2 S(n)( n-1)}{n-1}\right)^{2} \int_{M}\left|\nabla  \left(|{\Phi}|^\frac{n}{2} \eta \right)\right|^{2} \rho \\
 &=\left(\frac{2 S(n) (n-1)}{n-2}\right)^{2}\left(\int_{M} \frac{n^2}{4}|\nabla| {\Phi}||^{2}|{\Phi}|^{n-2} \eta^{2} \varrho\right.\\
&\left. \quad +\int_{M} n|{\Phi}|^{n-1} \eta \nabla|{\Phi}| \cdot \nabla \eta \varrho+\int_{M}|{\Phi}|^{n}|\nabla \eta|^{2} \varrho \right)\\
&\leq  \left(\frac{2 S(n) (n-1)}{n-2}\right)^{2}\left((1+\delta)\int_{M} \frac{n^2}{4}|\nabla| {\Phi}||^{2}|{\Phi}|^{n-2} \eta^{2} \varrho\right.\\
&\left.\quad+\left(1+\frac{1}{\delta} \right) \int_{M}|{\Phi}|^{n}|\nabla \eta|^{2} \varrho \right).
\end{aligned}$$
Applying \eqref{eq:eq6} and notice that $\iota=2$, we have, for $0<\varepsilon< \frac{n}{2}-\frac{1}{2}$ 
\begin{equation}\label{eq:eq18}
 \begin{aligned}
\kappa_{2}^{-1} \left[\int_{M} \left(|{\Phi}|^{\frac{n}{2}} \eta \right)^{\frac{2(n)}{n-2}} \rho \right]^{\frac{n-2}{n}} &\leq \left\{\frac{\frac{n^2}{4}(1+\delta)}{4\left(\frac{n}{2}-\frac{1}{2}-\varepsilon\right)}\left(2 \int_{M}|{\Phi}|^{n+2} \eta^{2} \varrho\right.\right.\\
&\quad\left.+\frac{2}{n} \int_{M}|{\Phi}|^{n}|H|^{2} \eta^{2} \varrho+\frac{1}{\varepsilon} \int_{M}|{\Phi}|^{n}|\nabla \eta|^{2} \varrho\right) \\
&\quad\left.+\left(1+\frac{1}{\delta}\right) \int_{M}|{\Phi}|^{n}|\nabla \eta|^{2} \varrho\right\},
\end{aligned}
 \end{equation}
where $\kappa_2=\left(\frac{2S(n)(n-1)}{n-2}\right)^2$. 
By the fact that $|A|^2=|{\Phi}|^2+\frac{1}{n}|H|^2$, we can rewrite \eqref{eq:eq18} as 
$$\begin{aligned}
\kappa_{2}^{-1} \left[\int_{M} \left(|{\Phi}|^{\frac{n}{2}} \eta \right)^{\frac{2 n}{n-2}} \rho \right]^{\frac{n-2}{n}} &\leq \frac{\frac{n^2}{4}(1+\delta)}{2\left(\frac{n}{2}-\frac{1}{2}-\varepsilon\right)}\int_{M}|{\Phi}|^{n} |A|^2 \eta^{2} \varrho\\
&\quad+\hat{C}(n,\delta,\varepsilon) \int_{M}|{\Phi}|^{n}|\nabla \eta|^{2} \varrho ,
\end{aligned}$$
where $\hat{C}(n,\delta,\varepsilon)$ is explicit positive constant depending on $n,\delta,\varepsilon$. Applying H\"{o}lder inequality, we have
 $$\begin{aligned}
\kappa_{2}^{-1} \left[\int_{M} \left(|{\Phi}|^{\frac{n}{2}} \eta \right)^{\frac{2n}{n-2}} \varrho \right]^{\frac{n-2}{n}} 
&\leq \frac{n^2(1+\delta)}{8\left(\frac{n}{2}-\frac{1}{2}-\varepsilon\right)} \left(\int_{M}(|A|^{2}\rho^{2/n})^{n/2}\right)^{\frac{2}{n}} \cdot\left(\int_{M}\left(|{\Phi}|^{\frac{n}{2}} \eta \varrho^{\frac{n-2}{2n}} \right)^{\frac{2n}{n-2}}\right)^{\frac{n-2}{n}}\\
&\quad+\hat{C}(n,\delta,\varepsilon) \int_{M}|{\Phi}|^{n}|\nabla \eta|^{2} \varrho ,\\
&\leq \frac{n^2(1+\delta)}{8\left(\frac{n}{2}-\frac{1}{2}-\varepsilon\right)} \left(\int_{M}|A|^{n}\varrho\right)^{\frac{2}{n}} \cdot\left(\int_{M}\left(|{\Phi}|^{\frac{n}{2}} \eta \varrho \right)^{\frac{2n}{n-2}} \right)^{\frac{n-2}{n}}\\
&\quad+ \hat{C}(n,\delta,\varepsilon) \int_{M}|A|^n\nabla \eta|^{2} \varrho ,\\
\end{aligned}$$
here we used $|\Phi|\leq |A|$ in the last inequality. 
Put $$K_2(n,\varepsilon,\delta) = \sqrt{\frac{8\left(\frac{n}{2}-\frac{1}{2}-\varepsilon\right)}{n^2(1+\delta)\kappa_2}},$$
and 
$$K_2(n) = \sup_{\delta>0, \,0< \varepsilon <a-\frac{n-1}{n}} K_2(n,\varepsilon,\delta) = \sqrt{\frac{(n-2)^2}{S(n)^2(n-1) n^2}}.$$
By the assumption
$$\left(\int_{M} |A|^{n} \varrho \right)^{\frac{1}{n}}<K_2(n)$$
and using the same argument as Theorem \ref{th:th12}, we complete the proof. 
\end{proof}
Now, as mentioned in \cite{IR17}, an application of the maximum principle and the weighted version of a result
in \cite{CS80} give that translators with mean curvature that does not change sign are
either $f$-stable (generalizing in particular Theorem 1.2.5 in \cite{42}, and Theorem 2.5 in \cite{421}) or they split as the
product of a line parallel to the translating direction and a minimal hypersurface
in the orthogonal complement of the line. Note that, in this latter case, by Fubini's 
theorem, the condition $|A|\in L^p({M}_f)$ for some $p>0$  is met if and only if $|A|\equiv 0$ (i.e. ${M}$ is a translator hyperplane). Moreover, to adapt the ideas in \cite{SSY} for minimal surface , Ma and Miquel proved in \cite{Ma} a refined Kato inequality on translating solitons as follows.
\begin{lemma}[\cite{Ma}]\label{ma}
Let $M^{n}$ be a hypersurface immersed in $\mathbb{R}^{n+1}$ satisfying 
$$|\nabla A|\leq \frac{3n+1}{2n}|\nabla H|,$$
then we have
$$|\nabla\Phi|^2\geq\frac{n+1}{n}|\nabla|\Phi||^2.$$
\end{lemma}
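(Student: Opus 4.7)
The plan is to establish the refined Kato inequality $|\nabla\Phi|^2 \geq \frac{n+1}{n}|\nabla|\Phi||^2$ by adapting the refined Kato argument used for minimal hypersurfaces in \cite{SSY} (where $\nabla A$ is a fully symmetric tensor) to the translator setting, in which the failure of $\nabla \Phi$ to be fully symmetric is controlled by $\nabla H$ thanks to the hypothesis. First, I would work at a point $p$ where $|\Phi|(p) > 0$ (the inequality is trivial elsewhere) and fix a local orthonormal frame $\{e_i\}$ diagonalizing $\Phi$, so that $\Phi_{ij}=\mu_i \delta_{ij}$ with $\sum_i \mu_i = 0$. Setting $\Psi_{ijk} := \nabla_k\Phi_{ij} = \nabla_k A_{ij} - \frac{1}{n}\nabla_k H\,\delta_{ij}$, I would use Codazzi in $\mathbb{R}^{n+1}$ (so that $\nabla_k A_{ij}$ is fully symmetric in $i,j,k$) to derive the orthogonal decomposition $|\nabla A|^2 = |\nabla \Phi|^2 + \frac{1}{n}|\nabla H|^2$ together with the Codazzi defect identity
\[
\Psi_{ijk}-\Psi_{ikj}=\tfrac{1}{n}\bigl(\nabla_j H\,\delta_{ik}-\nabla_k H\,\delta_{ij}\bigr),
\]
so $\Psi$ fails to be fully symmetric only by terms of order $|\nabla H|$.

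Next, in the diagonal frame one has $|\Phi|\nabla_k|\Phi| = \sum_i \mu_i\Psi_{iik}$, while $|\nabla\Phi|^2 = \sum_{i,j,k}\Psi_{ijk}^2$ splits as a diagonal sum $\sum_{i,k}\Psi_{iik}^2$ plus an off-diagonal sum $\sum_{i\neq j,k}\Psi_{ijk}^2$. Cauchy--Schwarz on the diagonal piece — even exploiting the trace-free constraints $\sum_i \mu_i = \sum_i \Psi_{iik} = 0$ — yields only the trivial Kato bound $|\nabla\Phi|^2\geq|\nabla|\Phi||^2$, so the improvement must come from the off-diagonal terms. In the Codazzi case ($H\equiv 0$), full symmetry of $\Psi$ ties off-diagonal components $\Psi_{ijk}$ with $k\in\{i,j\}$ to diagonal ones and yields the refined constant directly. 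Here, the defect identity above shows these symmetries hold up to explicit $\nabla H$ corrections, so decomposing $\Psi = \Psi^{\mathrm{sym}} + \Psi^{\mathrm{def}}$ into the fully symmetrized part and the defect lets one apply the refined estimate to $\Psi^{\mathrm{sym}}$ and absorb $\Psi^{\mathrm{def}}$.

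The main step, and principal obstacle, is the delicate algebra needed to balance these contributions — the diagonal Cauchy--Schwarz, the refined estimate on $\Psi^{\mathrm{sym}}$, and the defect absorption — to recover exactly the factor $\frac{n+1}{n}$. Here the hypothesis enters: $|\nabla A|\leq \frac{3n+1}{2n}|\nabla H|$ gives $|\nabla H|^2 \geq \frac{4n^2}{(3n+1)^2}|\nabla A|^2$, and combined with $|\nabla A|^2 = |\nabla\Phi|^2 + \frac{1}{n}|\nabla H|^2$ this yields an explicit lower bound $|\nabla H|^2 \geq c_n |\nabla\Phi|^2$ with $c_n = \frac{4n^2}{9n^2+2n+1}$. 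This is precisely what allows the defect terms $\Psi^{\mathrm{def}}$, which carry factors of $\nabla H$, to be reinterpreted as controlled contributions to $|\nabla\Phi|^2$ rather than as obstructions. The specific numerical constant $\frac{3n+1}{2n}$ in the hypothesis is presumably the sharp threshold making this algebraic balance work out to the refined factor $\frac{n+1}{n}$.
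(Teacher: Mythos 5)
The paper does not actually prove this lemma --- it is imported wholesale from \cite{Ma} --- so there is no in-paper argument to compare against; I can only judge your proposal on its own terms. As a proof it is incomplete by your own admission: the orthogonal splitting $|\nabla A|^2=|\nabla\Phi|^2+\frac1n|\nabla H|^2$, the Codazzi defect identity, and the diagonal Cauchy--Schwarz are all correct but standard, and the entire quantitative content (``the delicate algebra needed to balance these contributions'') is deferred. That is already a genuine gap. But the more serious problem is that this algebra cannot be made to work for the statement as printed, because the statement is false. Take $M$ to be the graph of $u(x)=\frac12\sum_i\lambda_i x_i^2+\frac16x_1^3$ near $0$, with $\lambda_1=-(n-1)$ and $\lambda_2=\cdots=\lambda_n=1$. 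At the origin $g_{ij}=\delta_{ij}$, $A_{ij}=\lambda_i\delta_{ij}$, $H=0$ (so $\Phi=A$), and $\nabla_kA_{ij}=u_{ijk}=\delta_{i1}\delta_{j1}\delta_{k1}$. Then $|\nabla A|=|\nabla H|=1$, so the hypothesis $|\nabla A|\le\frac{3n+1}{2n}|\nabla H|$ holds (strictly, hence on a neighborhood), while
\begin{equation*}
|\nabla\Phi|^2=|\nabla A|^2-\tfrac1n|\nabla H|^2=\tfrac{n-1}{n},
\qquad
|\nabla|\Phi||^2=\frac{\bigl(\sum_i\mu_i\nabla_1A_{ii}\bigr)^2}{|\Phi|^2}=\frac{(n-1)^2}{n(n-1)}=\tfrac{n-1}{n},
\end{equation*}
so $|\nabla\Phi|^2=|\nabla|\Phi||^2<\frac{n+1}{n}|\nabla|\Phi||^2$.

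This pinpoints where your strategy breaks: the hypothesis, as you correctly extract, yields only a \emph{lower} bound $|\nabla H|^2\ge c_n|\nabla A|^2$, and a lower bound on $|\nabla H|$ does nothing to exclude the rank-one configuration $\nabla_kA_{ij}=v_iv_jv_k$ above, which saturates the ordinary Kato inequality exactly. A refined Kato inequality for $\Phi$ must instead control the Codazzi defect of $\nabla\Phi$ from \emph{above}, i.e.\ it needs an upper bound on $|\nabla H|$ relative to $|\nabla A|$ (or $|\nabla\Phi|$): when $\nabla H=0$ the tensor $\nabla\Phi$ is fully symmetric and trace-free and the Schoen--Simon--Yau argument gives the constant $\frac{n+2}{n}$, which then degrades continuously as the ratio $|\nabla H|/|\nabla A|$ grows. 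So the inequality in the hypothesis must point the other way (as it presumably does in \cite{Ma}); the fact that your reduction produced a bound in the unhelpful direction should have been the warning sign, and ``presumably the sharp threshold makes the balance work out'' is exactly the step at which the argument collapses. Before attempting the algebra, the statement itself needs to be corrected against the source.
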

Note that on the translating soliton $M$, we have $\nabla H=\left\langle \nabla\nu, v\right\rangle=A(\cdot, v)$, so the condition becomes $|\nabla A|\leq \frac{3n+1}{2n}|A(\cdot, v)|$. Now, under these assumptions, we obtain the following result which can be considered as an improvement of  Theorem 6 in \cite{Ma}.
\begin{theorem}\label{dung1}
Let $x: {M}^{n\ge 2} \rightarrow \mathbb{R}^{n+1}$ be a translator with mean curvature which
does not change sign. Suppose that
$$|\nabla A|\leq \frac{3n+1}{2n}|\nabla H|$$
and the traceless second fundamental form of the
immersion satisfies $|\Phi| \in L^{p}\left({M}_{f}\right)$ for $p\in \left( 2-\frac{2}{\sqrt[]{n}},2+\frac{2}{\sqrt[]{n}} \right).$ Then ${M}$ is a translator hyperplane.
\end{theorem}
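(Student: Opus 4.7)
The plan is to split the argument along the $f$-stability dichotomy recalled just before the statement. Since $H$ does not change sign, either $M$ factors as $\tilde{M}\times L$ with $L$ a line parallel to $V$ and $\tilde{M}$ minimal, in which case $H\equiv 0$, so $|\Phi|=|A|$ and the hypothesis $|\Phi|\in L^p(M_f)$ together with Fubini forces $|A|\equiv 0$ and $M$ is already a hyperplane; or $M$ is $f$-stable, meaning
$$\int_M|A|^2\phi^2\rho\,d\mu \leq \int_M|\nabla\phi|^2\rho\,d\mu\quad\text{for all }\phi\in C_c^\infty(M).$$
The substance of the proof lies in this second case, and my strategy is an integral estimate \emph{\`a la} Schoen-Simon-Yau in which the usual Kato inequality is replaced by the refined version of Lemma \ref{ma}.

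First, I would combine \eqref{e02} with the Bochner-type identity $\Delta_f|\Phi|^2=2|\Phi|\Delta_f|\Phi|+2|\nabla|\Phi||^2$ and the refined Kato inequality $|\nabla\Phi|^2\geq\tfrac{n+1}{n}|\nabla|\Phi||^2$ to extract the pointwise bound
$$|\Phi|\,\Delta_f|\Phi|\;\geq\;\tfrac{1}{n}|\nabla|\Phi||^2-|A|^2|\Phi|^2.$$
Multiplying by $|\Phi|^{p-2}\eta^2$ with a compactly supported cutoff $\eta$ (regularized as $(|\Phi|^2+\varepsilon)^{(p-2)/2}\eta^2$ when $p<2$), integrating against $\rho\,d\mu$, integrating by parts, and then pairing with $f$-stability tested on $\phi=|\Phi|^{p/2}\eta$, the two cross terms of the form $\int|\Phi|^{p-1}\eta\,\nabla|\Phi|\cdot\nabla\eta\,\rho$ combine to yield, after rearrangement,
$$\Bigl(\tfrac{1}{n}+p-1-\tfrac{p^2}{4}\Bigr)\int_M|\Phi|^{p-2}|\nabla|\Phi||^2\eta^2\rho \;\leq\; (p-2)\!\int_M|\Phi|^{p-1}\eta\,\nabla|\Phi|\!\cdot\!\nabla\eta\,\rho+\int_M|\Phi|^p|\nabla\eta|^2\rho.$$
The coefficient on the left is strictly positive precisely when $(p-2)^2<4/n$, which is exactly the hypothesis on $p$. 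Absorbing the remaining cross term by Cauchy-Schwarz with a free parameter and plugging in the standard radial cutoff $\eta_r$ satisfying $|\nabla\eta_r|\leq C/r$, the integrability $|\Phi|\in L^p(M_f)$ forces the right-hand side to $0$ as $r\to\infty$, so that $\nabla|\Phi|\equiv 0$ and $|\Phi|$ is constant on $M$.

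To finish I must rule out a nontrivial constant value. If $|\Phi|\equiv c>0$, then $\int_M\rho\,d\mu=c^{-p}\int_M|\Phi|^p\rho\,d\mu<\infty$, and testing $f$-stability with $\eta_r$ together with $|A|^2\geq|\Phi|^2=c^2$ gives $c^2\int\eta_r^2\rho\leq\int|\nabla\eta_r|^2\rho$, whose left-hand side tends to $c^2\int_M\rho\,d\mu>0$ while the right-hand side tends to $0$ by finite weighted volume, a contradiction. Hence $|\Phi|\equiv 0$, and the conclusion that $M$ is a hyperplane follows from the Impera-Rimoldi argument recalled at the end of the proof of Theorem \ref{th:th12}. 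The main obstacle I anticipate is the constant-tracking step: the $\tfrac{1}{n}$ gain from the refined Kato inequality must exactly offset the $\tfrac{p^2}{4}$ loss from the stability inequality, so that after absorbing the cross term with a free parameter the resulting $p$-window does not shrink below the sharp range $(p-2)^2<4/n$; the regularization at $\{|\Phi|=0\}$ when $p<2$ and the elimination of the nonzero constant case are further technical points but should be routine.
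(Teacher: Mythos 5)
Your proposal is correct and follows essentially the same route as the paper: the $f$-stability dichotomy, the pointwise inequality $|\Phi|\,\Delta_f|\Phi|\ge \tfrac1n|\nabla|\Phi||^2-|A|^2|\Phi|^2$ obtained from \eqref{e02} and Lemma \ref{ma}, testing against $|\Phi|^{p-2}\eta^2$ and pairing with the stability inequality applied to $|\Phi|^{p/2}\eta$, and the resulting positivity condition $p-1-\tfrac{p^2}{4}+\tfrac1n>0$, i.e. $(p-2)^2<4/n$. Your two refinements --- regularizing $|\Phi|^{p-2}$ when $p<2$ (the paper only carries out the integration by parts for $a=p-1>1$ even though the stated range allows $p<2$) and eliminating the nonzero-constant case via finite weighted volume tested against stability rather than the paper's appeal to Euclidean volume growth --- are welcome tightenings of the same argument.
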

\begin{proof}Since the curvature does not change sign, we may assume that $M$ is $f$-stable. Otherwise, $|A|\equiv0$, so $M$ is a hyperplane. From the definition of the $f$-Laplacian operator and the equation \eqref{e02}, we have 
$$\left| \Phi  \right|{{\Delta }_{f}}\left| \Phi  \right|=|\nabla \Phi|^{2}-{{\left| \nabla \left| \Phi  \right| \right|}^{2}}-|{A}|^{2}|{\Phi}|^{2}.$$
By the Kato-type inequality in Lemma \ref{ma}, this implies 
\begin{align}\label{02}
\left| \Phi  \right|{{\Delta }_{f}}\left| \Phi  \right|\ge \frac{1}{n}{{\left| \nabla \left| \Phi  \right| \right|}^{2}}-|A{{|}^{2}}|\Phi {{|}^{2}}.
\end{align}
Now, let $\eta$ be a smooth compactly supported function on ${M}.$
For any $a>1,$ multiplying $\left| \Phi  \right|^{a-1} \eta^2$ both sides of the \eqref{02} and integrating by parts with respect to the measure $e^{-f} d\mu$ on ${M}$ yield
\begin{align}\label{03}
\int_{{M} }{{{\eta^2\left| \Phi  \right|}^{a}}{{\Delta }_{f}}\left| \Phi  \right|{{e}^{-f}}d\mu }\ge \frac{1}{n}\int_{{M} }{{{\eta^2\left| \Phi  \right|}^{a-1}}{{\left| \nabla \left| \Phi  \right| \right|}^{2}}{{e}^{-f}}d\mu }-\int_{{M} }{|A{{|}^{2}}\eta^2|\Phi {{|}^{a+1}}{{e}^{-f}}d\mu }.
\end{align}
Since $\eta$ has compact support on ${M},$ by the Stokes theorem, it shows that 
\begin{align*}
\int_{{M} }{{{\eta }^{2}}{{\left| \Phi  \right|}^{a}}{{\Delta }_{f}}\left| \Phi  \right|{{e}^{-f}}d\mu }&=-\int_{{M} }{\left\langle \nabla \left( {{\eta }^{2}}{{\left| \Phi  \right|}^{a}} \right),\nabla \left| \Phi  \right| \right\rangle {{e}^{-f}}d\mu }\nonumber\\
&=-\int_{{M} }{\left\langle 2\eta {{\left| \Phi  \right|}^{a}}\nabla \eta +a{{\eta }^{2}}{{\left| \Phi  \right|}^{a-1}}\nabla \left| \Phi  \right|,\nabla \left| \Phi  \right| \right\rangle {{e}^{-f}}d\mu }\nonumber\\
&=-2\int_{{M} }{ {{\left| \Phi  \right|}^{a}}\left\langle \nabla \eta ,\nabla \left| \Phi  \right| \right\rangle \eta {{e}^{-f}}d\mu }-a\int_{{M} }{{{\eta }^{2}}{{\left| \Phi  \right|}^{a-1}}{{\left| \nabla \left| \Phi  \right| \right|}^{2}}{{e}^{-f}}d\mu }.
\end{align*}
Subsituting the above identity into \eqref{03}, we obtain 
\begin{align}
&-2\int_{{M} }{{{\left| \Phi  \right|}^{a}}\left\langle \nabla \eta ,\nabla \left| \Phi  \right| \right\rangle \eta  {{e}^{-f}}d\mu }-a\int_{{M} }{{{\eta }^{2}}{{\left| \Phi  \right|}^{a-1}}{{\left| \nabla \left| \Phi  \right| \right|}^{2}}{{e}^{-f}}d\mu }\nonumber\\
&\quad\ge \frac{1}{n}\int_{{M} }{{{\eta^2\left| \Phi  \right|}^{a-1}}{{\left| \nabla \left| \Phi  \right| \right|}^{2}}{{e}^{-f}}d\mu }-\int_{{M} }{|A{{|}^{2}}\eta^2|\Phi {{|}^{a+1}}{{e}^{-f}}d\mu },\nonumber
\end{align}
or equivalently 
\begin{align}\label{05}
&\left( a+\frac{1}{n} \right)\int_{{M} }{{{\eta }^{2}}{{\left| \Phi  \right|}^{a-1}}{{\left| \nabla \left| \Phi  \right| \right|}^{2}}{{e}^{-f}}d\mu }\nonumber\\
&\quad\quad\le 2\int_{{M} }{ {{\left| \Phi  \right|}^{a}}\left\langle \nabla \eta ,\nabla \left| \Phi  \right| \right\rangle \eta {{e}^{-f}}d\mu }\ +\int_{{M} }{|A{{|}^{2}}\eta^2|\Phi {{|}^{a+1}}{{e}^{-f}}d\mu }.
\end{align}
On the other hand, since ${M}$ satisfies the stablity inequality, we have 
$$\int_{{M} }{{{\left| A \right|}^{2}}{{\psi }^{2}}{{e}^{-f}}d\mu }\le \int_{{M} }{{{\left| \nabla \psi  \right|}^{2}}{{e}^{-f}}d\mu }.$$
Replacing $\psi$ by $\eta |\Phi {{|}^{\frac{a+1}{2}}}$  in the above inequality gives
\begin{align}\label{06}
	\int_{{M} }{|A{{|}^{2}}{{\eta }^{2}}|\Phi {{|}^{a+1}}{{e}^{-f}}d\mu }\le& \int_{{M} }{{{\left| \nabla \left( \eta |\Phi {{|}^{\frac{a+1}{2}}} \right) \right|}^{2}}{{e}^{-f}}d\mu }\nonumber\\
=&
 \int_{{M} }{{{\left| \Phi  \right|}^{a+1}}{{\left| \nabla \eta  \right|}^{2}}{{e}^{-f}}d\mu }+\left( a+1 \right)\int_{{M} }{|\Phi {{|}^{a}}\left\langle \nabla \eta ,\nabla |\Phi | \right\rangle \eta {{e}^{-f}}d\mu }\nonumber\\
&\quad+\frac{{{\left( a+1 \right)}^{2}}}{4}\int_{{M} }{|\Phi {{|}^{a-1}}{{\left| \nabla |\nabla \Phi | \right|}^{2}}{{\eta }^{2}}{{e}^{-f}}d\mu }.
\end{align}
Combining \eqref{05} and \eqref{06}, we have
\begin{align}
\left( a+\frac{1}{n} \right)\int_{{M} }{{{\eta }^{2}}{{\left| \Phi  \right|}^{a-1}}{{\left| \nabla \left| \Phi  \right| \right|}^{2}}{{e}^{-f}}d\mu }&\le 2\int_{{M} }{|\Phi {{|}^{a}}\left\langle \nabla \eta ,\nabla |\Phi | \right\rangle \eta {{e}^{-f}}d\mu }+\int_{{M} }{{{\left| \Phi  \right|}^{a+1}}{{\left| \nabla \eta  \right|}^{2}}{{e}^{-f}}d\mu }\nonumber\\
&\quad+\left( a+1 \right)\int_{{M} }{|\Phi {{|}^{a}}\left\langle \nabla \eta ,\nabla |\Phi | \right\rangle \eta {{e}^{-f}}d\mu }\nonumber\\
&\quad+\frac{{{\left( a+1 \right)}^{2}}}{4}\int_{{M} }{|\Phi {{|}^{a-1}}{{\left| \nabla |\nabla \Phi | \right|}^{2}}{{\eta }^{2}}{{e}^{-f}}d\mu }.\nonumber
\end{align}
Hence,
\begin{align}\label{010}
\left[ a+\frac{1}{n}-\frac{{{\left( a+1 \right)}^{2}}}{4} \right]\int_{{M} }{{{\eta }^{2}}{{\left| \Phi  \right|}^{a-1}}{{\left| \nabla \left| \Phi  \right| \right|}^{2}}{{e}^{-f}}d\mu }&\le \int_{{M} }{{{\left| \Phi  \right|}^{a+1}}{{\left| \nabla \eta  \right|}^{2}}{{e}^{-f}}d\mu }\nonumber\\
&\quad+\left( a+3 \right)\int_{{M} }{|\Phi {{|}^{a}}\left\langle \nabla \eta ,\nabla |\Phi | \right\rangle \eta {{e}^{-f}}d\mu }.
\end{align}
From the Cauchy-Schwarz inequality and the inequality $xy\le \varepsilon {{x}^{2}}+\frac{1}{4\varepsilon }{{y}^{2}}$ for all $\varepsilon>0,$ we see that
\begin{align}\label{011}
(a+3)|\Phi {{|}^{a}}\left\langle \nabla \eta ,\nabla |\Phi | \right\rangle \eta 
&\le |a+3||\Phi {{|}^{a}}\left| \nabla \eta  \right|\left| \nabla |\Phi | \right|\left| \eta  \right|\nonumber\\
&=|a+3|\left( |\Phi {{|}^{\frac{a-1}{2}}}\left| \nabla |\Phi | \right|\left| \eta  \right| \right)\left( |\Phi {{|}^{\frac{a+1}{2}}}\left| \nabla \eta  \right| \right)\nonumber\\
&\le\varepsilon|\Phi {{|}^{a-1}}{{\left| \nabla |\Phi | \right|}^{2}}{{\eta }^{2}}+\frac{\left( a+3 \right)^2}{4\varepsilon}|\Phi {{|}^{a+1}}{{\left| \nabla \eta  \right|}^{2}}.
\end{align}
Substituting \eqref{011} into \eqref{010}, we get
\begin{align}
&\left[ a+\frac{1}{n}-\frac{{{\left( a+1 \right)}^{2}}}{4}-\varepsilon \right]\int_{{M} }{{{\left| \Phi  \right|}^{a-1}}{{\left| \nabla \left| \Phi  \right| \right|}^{2}}{{\eta }^{2}}{{e}^{-f}}d\mu }\le \left[ 1+\frac{{{\left( a+3 \right)}^{2}}}{4\varepsilon} \right]\int_{{M} }{{{\left| \Phi  \right|}^{a+1}}{{\left| \nabla \eta  \right|}^{2}}{{e}^{-f}}d\mu }.\nonumber
\end{align}
Now let $p=a+1$. Then, the above inequality becomes
$$\left[ p-1-\frac{{{p}^{2}}}{4}+\frac{1}{n} -\varepsilon\right]\int_{{M} }{{{\left| \Phi  \right|}^{p-2}}{{\left| \nabla \left| \Phi   \right| \right|}^{2}}{{\eta }^{2}}{{e}^{-f}}d\mu }\le \left[ 1+\frac{{{\left( p+2 \right)}^{2}}}{4\varepsilon} \right]\int_{{M} }{{{\left| \Phi   \right|}^{p}}{{\left| \nabla \eta  \right|}^{2}}{{e}^{-f}}d\mu }.$$
Next, we choose the number $p$ to be
$$p-1-\frac{{{p}^{2}}}{4}+\frac{1}{n}>0,$$
or equivalently
 $$2-\frac{2}{\sqrt[]{n}}<p<2+\frac{2}{\sqrt[]{n}}=2\left( 1+\sqrt[]{\frac{1}{n}} \right).$$
Hence, for $2-\frac{2}{\sqrt[]{n}}< p < 2+\frac{2}{\sqrt[]{n}}$, we can choose $\varepsilon>0$ small such that there is a constant $C>0$ depending on $n, p$ such that 
$$\int_{{M} }{{{\left| \Phi  \right|}^{p-2}}{{\left| \nabla \left| \Phi   \right| \right|}^{2}}{{\eta }^{2}}{{e}^{-f}}d\mu }\le C\int_{{M} }{{{\left| \Phi   \right|}^{p}}{{\left| \nabla \eta  \right|}^{2}}{{e}^{-f}}d\mu }.$$
Now, for some fixed point $o\in M$ and $R>0$, we choose a test function $\eta$ satisfying $\eta\in\mathcal{C}^\infty(M), 0\le \eta\le 1$, $\eta=1$ in $B_o(R)$, $\eta=0$ outside $B_o(2R)$, and $|\nabla\eta|\le\frac{2}{R}$. Plugging $\eta$ in the above inequality then let $R$ tends to infinity, we conclude that $|\nabla|\Phi||=0$, since $|\Phi|\in L^p({M}_f)$. Therefore, $|\Phi|$ is constant. Note that a translating solition is of Euclidean growth volume (\cite{Xin15}), this implies $\Phi=0$ because $|\Phi|\in L^p({M}_f)$. Now, we apply the argument as in the proof of Theorem \ref{th:th12}   
 to conclude that ${M}$ is a hyperplane.
\end{proof}
As a consequence of this theorem, for $p=2$ we obtain the following corollary which can be considered as an improvement of Theorem 6 by Ma and Miquel in \cite{Ma}. 
\begin{corollary}
Let $x: {M}^{n\ge 2} \rightarrow \mathbb{R}^{n+1}$ be a translator with mean curvature which does not change sign and
$$|\nabla A|\leq \frac{3n+1}{2n}|\nabla H|.$$ Suppose that the traceless second fundamental form of the
immersion satisfies $|\Phi| \in L^{2}\left({M}_{f}\right)$. Then ${M}$ is a translator hyperplane.
\end{corollary}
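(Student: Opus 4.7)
The plan is to observe that the corollary is the special case $p=2$ of Theorem \ref{dung1}, and reduce it to checking the admissibility of the exponent. First I would verify that $p=2$ lies in the open interval $\left(2-\tfrac{2}{\sqrt{n}},\,2+\tfrac{2}{\sqrt{n}}\right)$ for every $n\geq 2$; this is trivial because $\tfrac{2}{\sqrt{n}}>0$. Therefore all hypotheses of Theorem \ref{dung1} are satisfied: mean curvature of constant sign (so that, after ruling out the split case $|A|\equiv 0$ via Fubini, we may assume $M$ is $f$-stable), the refined Kato inequality $|\nabla A|\leq\tfrac{3n+1}{2n}|\nabla H|$, and the weighted integrability $|\Phi|\in L^2(M_f)$.

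With these in hand, one simply invokes Theorem \ref{dung1} with $p=2$ to conclude that $M$ is a translator hyperplane. As a sanity check on the proof of Theorem \ref{dung1}, note that the crucial positivity constant
$$p-1-\frac{p^2}{4}+\frac{1}{n}\Bigg|_{p=2}=\frac{1}{n}>0,$$
so one can pick $\varepsilon>0$ small enough that the coefficient on the left-hand side of the integral inequality in the proof of Theorem \ref{dung1} remains strictly positive, and the cutoff argument (taking $\eta$ equal to $1$ on $B_o(R)$, vanishing outside $B_o(2R)$, with $|\nabla\eta|\leq 2/R$) produces $|\nabla|\Phi||\equiv 0$, hence $|\Phi|\equiv\text{const}$. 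The Euclidean volume growth of translators combined with $|\Phi|\in L^2(M_f)$ then forces $|\Phi|\equiv 0$, and the rigidity argument borrowed from \cite{IR17} in the proof of Theorem \ref{th:th12} delivers the hyperplane conclusion.

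Consequently there is no genuine obstacle here: the entire content is packaged in Theorem \ref{dung1}, and the only thing to exhibit is the trivial inclusion $2\in\left(2-\tfrac{2}{\sqrt{n}},\,2+\tfrac{2}{\sqrt{n}}\right)$. If anything deserves emphasis, it is that $p=2$ sits strictly inside the admissible window for every dimension $n\geq 2$, which is what makes the $L^2$ version uniformly available across dimensions and yields the improvement over Theorem 6 of \cite{Ma}.
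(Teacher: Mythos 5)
Your proposal is correct and is exactly what the paper does: the corollary is stated as the immediate specialization $p=2$ of Theorem \ref{dung1}, and the only thing to check is the trivial inclusion $2\in\left(2-\tfrac{2}{\sqrt{n}},2+\tfrac{2}{\sqrt{n}}\right)$, equivalently the positivity $p-1-\tfrac{p^2}{4}+\tfrac{1}{n}=\tfrac{1}{n}>0$ at $p=2$. Your sanity checks on the cutoff argument and the volume-growth step match the proof of Theorem \ref{dung1} as given in the paper.
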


\section{Vanishing result for weighted harmonic forms}
In this section we give a proof of Theorem \ref{harmonic}.

\begin{proof}[Proof the Theorem \ref{harmonic}]
Let $\omega$ be $L^p_f$ harmonic $1$-form on ${M},$ i.e.,
$$\triangle_f \omega=0, \int_{{M}}|\omega|^{p}e^{-f}d\mu<\infty.$$
We denote the dual vector field of $\omega$ by $\omega^\sharp.$ Applying the extended Bochner formula for $L^p_f$ harmonic $1$-form, we get
\begin{align}\label{012}
\Delta_f {{\left| \omega  \right|}^{2}}&=2{{\left| \nabla \omega  \right|}^{2}}+2\left\langle \Delta_f \omega ,\omega  \right\rangle +2\Ric_f\left( {{\omega }^{\sharp }},{{\omega }^{\sharp }} \right)\nonumber\\
&=2{{\left| \nabla \omega  \right|}^{2}}+2\Ric_f\left( {{\omega }^{\sharp }},{{\omega }^{\sharp }} \right).
\end{align}
Note that 
\begin{align}
{{\Delta }_{f}}{{\left| \omega \right|}^{2}}=2\left| \omega \right|{{\Delta }_{f}}\left| \omega \right|+2{{\left| \nabla \left| \omega \right| \right|}^{2}}\nonumber.
\end{align}
and the Bakry-Emery Ricci tensor of ${M},$ satisfies
\begin{align}
\Ric_f\left( {{\omega }^{\sharp }},{{\omega }^{\sharp }} \right)
&=-\left\langle {{A}^{2}}{{\omega }^{\sharp }},{{\omega }^{\sharp }}\nonumber \right\rangle.
\end{align}
This implies
$$\left| \omega  \right|{{\Delta }_{f}}\left| \omega  \right|={{\left| \nabla \omega  \right|}^{2}}-{{\left| \nabla \left| \omega  \right| \right|}^{2}}-\left\langle {{A}^{2}}{{\omega }^{\sharp }},{{\omega }^{\sharp }} \right\rangle .$$
Consequently,  by Kato inequality, we have
$$\left| \omega  \right|{{\Delta }_{f}}\left| \omega  \right|\ge -\left\langle {{A}^{2}}{{\omega }^{\sharp }},{{\omega }^{\sharp }} \right\rangle \ge -\left| {{A}^{2}}{{\omega }^{\sharp }} \right|\left| {{\omega }^{\sharp }} \right|\ge -\left| {{A}} \right|^2{{\left| \omega  \right|}^{2}}.$$
Now, let $\eta$ be a smooth compactly supported function on ${M}.$ By multiplying both sides
of the above inequality by $\eta^{2}|\omega|^{p-2}$  and then integrating the obtained result, we arrive at
\begin{align}\label{015}
\int_{{M} }{{{\eta }^{2}}{{\left| \omega  \right|}^{p-1}}{{\Delta }_{f}}\left| \omega  \right|{{e}^{-f}}d\mu }\ge -\int_{{M} }{\left| {{A}} \right|^{2}{{\left| \omega  \right|}^{p}}{{\eta }^{2}}{{e}^{-f}}d\mu }.
\end{align}
Since $\eta$ has compact support on ${M},$ by the Stokes theorem, we see that
\begin{align}
&\int_{{M} }{{{\eta }^{2}}{{\left| \omega \right|}^{p-1}}{{\Delta }_{f}}\left| \omega \right|{{e}^{-f}}d\mu }\nonumber\\
&\quad\quad=-\int_{{M} }{\left\langle \nabla \left( {{\eta }^{2}}{{\left| \omega \right|}^{p-1}} \right),\nabla \left| \omega \right| \right\rangle {{e}^{-f}}d\mu }\nonumber\\
&\quad\quad=-2\int_{{M} }{ {{\left| \omega \right|}^{p-1}}\left\langle \nabla \eta ,\nabla \left| \omega \right| \right\rangle \eta {{e}^{-f}}d\mu }-(p-1)\int_{{M} }{{{\eta }^{2}}{{\left| \omega \right|}^{p-2}}{{\left| \nabla \left| \omega \right| \right|}^{2}}{{e}^{-f}}d\mu }.\nonumber
\end{align}
This inequality and \eqref{015} implies
\begin{align}\label{017}
&(p-1)\int_{{M} }{{{\eta }^{2}}{{\left| \omega  \right|}^{p-2}}{{\left| \nabla \left| \omega  \right| \right|}^{2}}{{e}^{-f}}d\mu }\nonumber\\
&\quad\quad\le -2\int_{{M} }{{{\left| \omega  \right|}^{p-1}}\left\langle \nabla \eta ,\nabla \left| \omega  \right| \right\rangle \eta {{e}^{-f}}d\mu }+\int_{{M} }{{{\left| A \right|}^{2}}{{\left| \omega  \right|}^{p}}{{\eta }^{2}}{{e}^{-f}}d\mu }.
\end{align}
By Holder inequality and weighted Sobolev inequality, we have 
\begin{align}\label{018}
&\int_{{M} }{{{\left| A \right|}^{2}}{{\left| \omega  \right|}^{p}}{{\eta }^{2}}{{e}^{-f}}d\mu }\nonumber\\
&\quad\le\left(\int_{M}|A|^n\right)^\frac{2}{n}\left(\int_{M}(\eta|\omega|^\frac{p}{2})^\frac{2n}{n-2}\right)^\frac{n-2}{n}\nonumber\\
&\quad\le \left(\dfrac{2C(n)}{n-1}\right)^2\left\| A \right\|_{n,f}^{2}\int_{{M} }{{{\left| \nabla \left( \eta {{\left| \omega  \right|}^{\frac{p}{2}}} \right) \right|}^{2}}{{e}^{-f}}d\mu }\nonumber\\
&\quad=D_n\left\| A \right\|_{n,f}^{2}\int_{{M} }{\left( {{\left| \omega  \right|}^{p}}{{\left| \nabla \eta  \right|}^{2}}+p{{\left| \omega  \right|}^{p-1}}\left\langle \nabla \left| \omega  \right| ,\nabla \eta  \right\rangle \eta +\frac{{{p}^{2}}}{4}{{\left| \omega  \right|}^{p-2}}{{\eta }^{2}}{{\left| \nabla \left| \omega  \right|  \right|}^{2}} \right){{e}^{-f}}d\mu },
\end{align}
where $D_n= \left(\frac{2C(n)}{n-1}\right)^2.$ Using the Cauchy-Schwarz inequality and the inequality $xy\le \varepsilon {{x}^{2}}+\frac{y^2}{4\varepsilon }$ for any $\varepsilon>0,$ we see that
\begin{align}
p{{\left| \omega  \right|}^{p-1}}\left\langle \nabla \left| \omega  \right|,\nabla \eta  \right\rangle \eta
&\le p{{\left| \omega  \right|}^{p-1}}\left| \nabla \left| \omega  \right| \right|\left| \nabla \eta  \right|\left| \eta  \right|\nonumber\\
&=\frac{{{\left| \nabla \eta  \right|}^{2}}{{\left| \omega  \right|}^{p}}}{\varepsilon }+\frac{\varepsilon {{p}^{2}}}{4}{{\left| \omega  \right|}^{p-2}}{{\eta }^{2}}{{\left| \nabla \left| \omega  \right| \right|}^{2}}.\nonumber
\end{align}
This together with \eqref{018} implies
\begin{align}
&\int_{{M} }{{{\left| A \right|}^{2}}{{\left| \omega  \right|}^{p}}{{\eta }^{2}}{{e}^{-f}}d\mu }\nonumber\\
&\quad\le D_n\left\| A \right\|_{n,f}^{2}\left[ \left( 1+\frac{1}{\varepsilon } \right)\int_{{M} }{{{\left| \omega  \right|}^{p}}{{\left| \nabla \eta  \right|}^{2}}{{e}^{-f}}d\mu }+\frac{(1+\varepsilon ){{p}^{2}}}{4}\int_{{M} }{{{\left| \omega  \right|}^{p-2}}{{\eta }^{2}}{{\left| \nabla \left| \omega  \right| \right|}^{2}}{{e}^{-f}}d\mu } \right]\nonumber\\
&\quad=\left( 1+\frac{1}{\varepsilon } \right)D_n\left\| A \right\|_{n,f}^{2}\int_{{M} }{{{\left| \omega  \right|}^{p}}{{\left| \nabla \eta  \right|}^{2}}{{e}^{-f}}d\mu }\nonumber\\
&\quad\quad+\frac{(1+\varepsilon ){{p}^{2}}}{4}D_n\left\| A \right\|_{n,f}^{2}\int_{{M} }{{{\left| \omega  \right|}^{p-2}}{{\eta }^{2}}{{\left| \nabla \left| \omega  \right| \right|}^{2}}{{e}^{-f}}d\mu }.\label{020}
\end{align}
On the other hand, for any $\varepsilon>0,$ we have 
\begin{align}\label{021}
-2\int_{{M} }{{{\left| \omega  \right|}^{p-1}}\left\langle \nabla \eta ,\nabla \left| \omega  \right| \right\rangle \eta {{e}^{-f}}d\mu }
&\le 2\int_{{M} }{{{\left| \omega  \right|}^{p-1}}\left| \left\langle \nabla \eta ,\nabla \left| \omega  \right| \right\rangle  \right|\left| \eta  \right|{{e}^{-f}}d\mu }\nonumber\\
&\le \frac{1}{\varepsilon }\int_{{M} }{{{\left| \nabla \eta  \right|}^{2}}{{\left| \omega  \right|}^{p}}{{e}^{-f}}d\mu }+\varepsilon \int_{{M} }{{{\left| \omega  \right|}^{p-2}}{{\left| \nabla \left| \omega  \right| \right|}^{2}}{{\eta }^{2}}{{e}^{-f}}d\mu }.
\end{align}
Combining \eqref{017}, \eqref{020}, and \eqref{021}, we get 
\begin{align}
&\left[ p-1-\frac{{{p}^{2}}}{4}D_n\left\| A \right\|_{n,f}^{2}-\frac{\varepsilon {{p}^{2}}}{4}D_n\left\| A \right\|_{n+1,f}^{2}+\varepsilon  \right]\int_{{M} }{{{\eta }^{2}}{{\left| \omega  \right|}^{p-2}}{{\left| \nabla \left| \omega  \right| \right|}^{2}}{{e}^{-f}}d\mu }\nonumber\\
&\quad\quad\quad\le \left[ \left( 1+\frac{1}{\varepsilon } \right)D_n\left\| A \right\|_{n,f}^{2}+\frac{1}{\varepsilon } \right]\int_{{M} }{{{\left| \omega  \right|}^{p}}{{\left| \nabla \eta  \right|}^{2}}{{e}^{-f}}d\mu }.\nonumber
\end{align}
For a sufficiently small $\varepsilon>0,$ the above inequality implies that there is constant $C>0$ such that 
\begin{align}\label{23}
\int_{{M} }{{{\left| \omega  \right|}^{p-2}}{{\left| \nabla \left| \omega  \right| \right|}^{2}}{{e}^{-f}}{{\eta }^{2}}d\mu }\ \le C\int_{{M} }{{{\left| \omega  \right|}^{p}}{{\left| \nabla \eta  \right|}^{2}}{{e}^{-f}}d\mu },	
\end{align}
provided that 
$$p-1-\frac{{{p}^{2}}}{4}D_n\left\| A \right\|_{n,f}^{2}>0,$$
or equivalent 
$$\left\| A \right\|_{n,f}^{2} <  \frac{4\left( p-1 \right)}{{{p}^{2}}{{D}_{n}}}=\frac{\left( p-1 \right){{\left( n-1 \right)}^{2}}}{{{p}^{2}}C^2\left( n \right)}.$$
Let $o \in {M}$ be a fixed point and let $B_{r}(o)$ be the geodesic ball centered at $o$ with radius
$R.$ We choose $\eta$ to be a smooth function on ${M}$ such that $0\le \eta \le 1.$ Moreover, $\eta$ satisfies:

\begin{itemize}
\item [(i)] $\eta=1$ on $B_{R/ 2}(o)$ and $\eta=0$ outside $B_{R}(o)$;
\item [(ii)] $\left| \nabla \eta  \right|\le \frac{2}{R}$. 
\end{itemize}
Applying this test function $\eta$ to \eqref{23}, we get
\begin{equation}\label{de1}
\int_{{{B}_{R}}\left( o \right)}{{{\left| \omega  \right|}^{p-2}}{{\left| \nabla \left| \omega  \right| \right|}^{2}}{{e}^{-f}}d\mu }\ \le \frac{4C}{{{R}^{2}}}\int_{{{B}_{R}}\left( o \right)}{{{\left| \omega  \right|}^{p}}{{e}^{-f}}d\mu }.
\end{equation}
Letting $R$ tend to $\infty$ in the above inequality and note that $\omega \in L^f_p,$ we conclude that $\nabla\left| \omega  \right|=0,$ which shows that $\left| \omega  \right|$ is a constant. Moreover, since $\int_{{M}}|\omega|^{p}e^{-f}d\mu<\infty$ and the weighted volume of ${M}$ is infinite, we finally get $\omega =0.$ The proof is complete.
\end{proof}
Now, we note that if a Sobolev inequality holds true on ${M}$ every end of ${M}$ is non-$f$-parabolic, for example see \cite{IR17}. Therefore, we have the following corollary.
\begin{corollary}
Let $x: {M}^{n} \rightarrow \mathbb{R}^{n+1}$ be a smooth complete translator contained in the halfspace $\Pi_{v,a}=\{y\in\mathbb{R}^{n+1}: \left\langle y,v\right\rangle\geq a\}$, where $a\in\mathbb{R}, v\in\mathbb{R}^{n+1}$ are fixed  and $n\geq3$. Furthermore, assume that
$${{\left\| A \right\|}_{n,f}}\le \frac{ n-1}{2S(n)},$$
	where $S(n)$ is the constant as in Lemma \ref{sob}.
Then there are no nontrivial $L_f^2$ harmonic $1$-forms on ${M}$. In particular, ${M}$ has only one end. 
\end{corollary}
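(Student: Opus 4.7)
The plan is to run a weighted Bochner--Kato--Sobolev argument along the lines of the classical vanishing results for $L^p$ harmonic forms on minimal submanifolds, but with the drift Laplacian $\Delta_f$ playing the role of $\Delta$ and the weighted Sobolev inequality of Lemma \ref{sob} replacing the Michael--Simon inequality. Concretely, let $\omega$ be a nontrivial $L^p_f$ weighted harmonic $1$-form and $\omega^\sharp$ its metric dual. The starting point is the weighted Bochner formula
$$\Delta_f|\omega|^2=2|\nabla\omega|^2+2\langle\Delta_f\omega,\omega\rangle+2\mathrm{Ric}_f(\omega^\sharp,\omega^\sharp),$$
combined with $\Delta_f\omega=0$ and the identity $\mathrm{Ric}_f(\omega^\sharp,\omega^\sharp)=-\langle A^2\omega^\sharp,\omega^\sharp\rangle$, which is the standard computation of the Bakry--Emery Ricci tensor on a translator. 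Together with the pointwise Kato inequality, this yields the differential inequality
$$|\omega|\,\Delta_f|\omega|\ \ge\ -|A|^2|\omega|^2.$$

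Next I would multiply this inequality by $\eta^2|\omega|^{p-2}$, where $\eta$ is a smooth compactly supported cutoff, and integrate against the weighted measure $e^{-f}d\mu$. An integration by parts transfers one derivative off $|\omega|$ and produces the basic energy inequality
$$(p-1)\!\int_M\eta^2|\omega|^{p-2}|\nabla|\omega||^2 e^{-f}d\mu\ \le\ -2\!\int_M|\omega|^{p-1}\eta\langle\nabla\eta,\nabla|\omega|\rangle e^{-f}d\mu+\int_M|A|^2|\omega|^p\eta^2 e^{-f}d\mu.$$
The cross term on the right is handled by a Cauchy--Schwarz / Young inequality with parameter $\varepsilon>0$. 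The crucial term is $\int_M|A|^2|\omega|^p\eta^2 e^{-f}d\mu$, which I would control using H\"older's inequality followed by the weighted Sobolev inequality of Lemma \ref{sob} applied to $u=\eta|\omega|^{p/2}$, yielding
$$\int_M|A|^2|\omega|^p\eta^2 e^{-f}d\mu\ \le\ \Bigl(\tfrac{2S(n)}{n-1}\Bigr)^{2}\|A\|_{n,f}^2\int_M|\nabla(\eta|\omega|^{p/2})|^2 e^{-f}d\mu.$$
Expanding the squared gradient, using Young's inequality once more to split the $\nabla\eta\cdot\nabla|\omega|$ cross term, and collecting the coefficients of $\int\eta^2|\omega|^{p-2}|\nabla|\omega||^2 e^{-f}d\mu$ on both sides leads, after choosing $\varepsilon$ sufficiently small, to an inequality of the form
$$\Bigl[(p-1)-\tfrac{p^2}{4}\bigl(\tfrac{2S(n)}{n-1}\bigr)^2\|A\|_{n,f}^{2}-o(1)\Bigr]\!\int_M\eta^2|\omega|^{p-2}|\nabla|\omega||^2 e^{-f}d\mu\ \le\ C\!\int_M|\omega|^p|\nabla\eta|^2 e^{-f}d\mu.$$

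The hypothesis $\|A\|_{n,f}<\sqrt{p-1}(n-1)/(pS(n))$ is precisely what makes the bracket positive, so the coefficient is a strictly positive constant depending on $n,p$. Finally I would specialize $\eta$ to a standard cutoff with $\eta\equiv1$ on $B_R(o)$, $\eta\equiv0$ outside $B_{2R}(o)$, and $|\nabla\eta|\le 2/R$; the right-hand side is then $O(R^{-2})\cdot\|\omega\|_{L^p_f}^p$, which vanishes as $R\to\infty$ because $\omega\in L^p_f$. Thus $\nabla|\omega|\equiv0$, so $|\omega|$ is constant, and since $M$ has infinite weighted volume (which follows from the fact that a translator is non-$f$-parabolic when a Sobolev inequality holds) the only $L^p_f$ constant is zero. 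The main obstacle, and what determines the sharp constant in the theorem, is the bookkeeping of the Young-inequality parameters so that the coefficient in front of the dominant $\int\eta^2|\omega|^{p-2}|\nabla|\omega||^2 e^{-f}d\mu$ term stays strictly positive exactly under the stated bound on $\|A\|_{n,f}$; this is where I would spend the most care.
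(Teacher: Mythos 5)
Your argument for the vanishing of $L^2_f$ (indeed $L^p_f$) harmonic $1$-forms is essentially the paper's own proof of Theorem \ref{harmonic}: the weighted Bochner formula with $\Ric_f(\omega^\sharp,\omega^\sharp)=-\langle A^2\omega^\sharp,\omega^\sharp\rangle$, Kato, the cutoff integration by parts, H\"older plus the weighted Sobolev inequality applied to $\eta|\omega|^{p/2}$, and the bookkeeping that makes the coefficient $(p-1)-\frac{p^2}{4}\bigl(\frac{2S(n)}{n-1}\bigr)^2\|A\|_{n,f}^2$ positive exactly under the stated bound (which at $p=2$ is $\frac{n-1}{2S(n)}$). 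That half is correct; the corollary in the paper simply cites Theorem \ref{harmonic} for it. One small caveat: the corollary as stated has $\|A\|_{n,f}\le\frac{n-1}{2S(n)}$, whereas your bracket (and the theorem) requires strict inequality; that is a quirk of the paper's statement rather than a defect of your argument.

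The genuine gap is that you never prove the second assertion, ``$M$ has only one end.'' This does not follow formally from the nonexistence of $L^2_f$ harmonic $1$-forms; it needs a bridge between ends and weighted harmonic objects. The paper's route is: the weighted Sobolev inequality of Lemma \ref{sob} forces every end of $M$ to be non-$f$-parabolic; hence, if $M$ had at least two ends, the weighted version of Li--Tam theory would produce a non-constant bounded $f$-harmonic function $u$ with finite weighted Dirichlet integral, so that $\omega=du$ would be a nontrivial $L^2_f$ harmonic $1$-form --- contradicting the vanishing you established. Without this step (or an equivalent one), the topological conclusion of the corollary is unproved. Also, be a little more careful with your closing claim that infinite weighted volume ``follows from'' non-$f$-parabolicity: what one actually uses is that the Sobolev inequality itself implies the weighted volume of $M$ is infinite, which is what rules out nonzero constants in $L^p_f$.
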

\begin{proof}Since every end of ${M}$ is non-$f$-parabolic, we can argue by contradiction to assume that ${M}$ has at least two ends. Then by Li-Tam theory \cite{litam}, there exists a non-constant $f$-harmonic function $u$ such that $\omega:=du$ satisfying $|\omega|\in L^2_f$. An application of Theorem \ref{harmonic} implies that $\omega=0$ or $u$ is constant. This is a contradiction. The proof is complete. 
\end{proof}

\section{Translators with a Sobolev inequality}
Suppose that ${M}$ satisfies the following Sobolev inequality
\begin{equation}\label{sobolev}
\left[\int_{M} u^{\frac{2(n+1)}{n-1}} \rho d \mu\right]^{\frac{n-1}{n+1}} \leq\left(\frac{2 C(n) n}{n-1}\right)^{2} \int_{M}|\nabla u|^{2} \rho d \mu
\end{equation}
for any $u$  that is a non-negative compactly supported $C^1$ function on $M$ and $C(n)$ is the Sobolev constant. In fact, the above inequality was proved in \cite{IR17}. However, the authors pointed out in \cite{IR19} that there is a gap in their proof of this inequality. Here, we assume that this inequality holds true. The Sobolev inequality \eqref{sobolev} was used by Kunikawa and Saito in \cite{KS19} to study the injectivity of the natural map between the first de Rham cohomology group with compact support, the reduced $L^2_f$ cohomology, and the space of $L^2_f$ $f$-harmonic $1$-forms. They proved that if $M$ supports the Sobolev inequality \eqref{sobolev} and admits a codimension one cycle which does not disconnect $M$ then the space of $L^2_f$ $f$-harmonic $1$-forms is non-trivial. 

Now, we apply the above Sobolev's inequality above to $u=|{\Phi}|^a \eta$. Then we have 
\begin{equation}
\begin{aligned}
\left[\int_{M} \left(|{\Phi}|^a \eta \right)^{\frac{2(n+1)}{n-1}} \rho \right]^{\frac{n-1}{n+1}} 
 &\leq\left(\frac{2 C(n) n}{n-1}\right)^{2} \int_{M}\left|\nabla  \left(|{\Phi}|^a \eta \right)\right|^{2} \rho \\
 &=\left(\frac{2 C(n) n}{n-1}\right)^{2}\left(\int_{M} a^2|\nabla| {\Phi}||^{2}|{\Phi}|^{2a-2} \eta^{2} \varrho\right.\\
&\left.\quad +\int_{M} 2a|{\Phi}|^{2a-1} \eta \nabla|{\Phi}| \cdot \nabla \eta \varrho+\int_{M}|{\Phi}|^{2a}|\nabla \eta|^{2} \varrho \right).\\
\end{aligned}
\end{equation} 
By the Cauchy inequalities, we obtain 
\begin{equation}
\begin{aligned}
\left[\int_{M} \left(|{\Phi}|^a \eta \right)^{\frac{2(n+1)}{n-1}} \rho \right]^{\frac{n-1}{n+1}}
 &\leq \left(\frac{2 C(n) n}{n-1}\right)^{2}\left(\int_{M} a^2|\nabla| {\Phi}||^{2}|{\Phi}|^{2a-2} \eta^{2} \varrho\right.\\
&\left. \quad+\int_{M} 2a|{\Phi}|^{2a-1} \eta \nabla|{\Phi}| \cdot \nabla \eta \varrho+\int_{M}|{\Phi}|^{2a}|\nabla \eta|^{2} \varrho \right)\\
&\leq \left(\frac{2 C(n) n}{n-1}\right)^{2}\left((1+\delta)\int_{M} a^2|\nabla| {\Phi}||^{2}|{\Phi}|^{2a-2} \eta^{2} \varrho\right.\\
&\left.\quad +\left(1+\frac{1}{\delta} \right) \int_{M}|{\Phi}|^{2a}|\nabla \eta|^{2} \varrho \right).
\end{aligned}
\end{equation} 
 Apply \eqref{eq:eq6} and keep in mind that right now $\iota=2$. For $0< \varepsilon < a -\frac{1}{2}$, we have 
 \begin{equation}\label{eq:eq13}
 \begin{aligned}
\kappa_{1}^{-1} \left[\int_{M} \left(|{\Phi}|^a \eta \right)^{\frac{2(n+1)}{n-1}} \rho \right]^{\frac{n-1}{n+1}} &\leq \left\{\frac{a^2(1+\delta)}{4\left(a-\frac{1}{2}-\varepsilon\right)}\left(2 \int_{M}|{\Phi}|^{2a+2} \eta^{2} \varrho\right.\right.\\
&\quad\left.+\frac{2}{n} \int_{M}|{\Phi}|^{2a}|H|^{2} \eta^{2} \varrho+\frac{1}{\varepsilon} \int_{M}|{\Phi}|^{2a}|\nabla \eta|^{2} \varrho\right) \\
&\quad\left.+\left(1+\frac{1}{\delta}\right) \int_{M}|{\Phi}|^{2a}|\nabla \eta|^{2} \varrho\right\},
\end{aligned}
 \end{equation}
where $\kappa_1=\left(\frac{2C(n)n}{n-1}\right)^2$. 

Using the fact that $|A|^2=|{\Phi}|^2+\frac{1}{n}|H|^2$, we can rewrite \eqref{eq:eq13} as 
 \begin{equation}\label{eq:eq14}
  \begin{aligned}
\kappa_{1}^{-1} \left[\int_{M} \left(|{\Phi}|^a \eta \right)^{\frac{2(n+1)}{n-1}} \rho \right]^{\frac{n-1}{n+1}} &\leq \frac{a^2(1+\delta)}{2\left(a-\frac{1}{2}-\varepsilon\right)}\int_{M}|{\Phi}|^{2a} |A|^2 \eta^{2} \varrho\\
&\quad+ \tilde{C}(n,a,\delta,\varepsilon) \int_{M}|{\Phi}|^{2a}|\nabla \eta|^{2} \varrho ,
\end{aligned}
 \end{equation}
 where $\tilde{C}(n,a,\delta,\varepsilon)$ is an explicit positive constant depending on $n,a,\delta, \varepsilon$. 
 
 By the Holder inequality, we have that 
 \begin{equation}\label{eq:eq15}
 \begin{aligned}
\int_{M}|{\Phi}|^{2a} |A|^2 \eta^{2} \varrho & \leq\left(\int_{M}|A|^{2 \cdot \frac{n+1}{2}} \varrho\right)^{\frac{2}{n+1}} \cdot\left(\int_{M}\left(|{\Phi}|^{2a} \eta^{2} \right)^{\frac{n+1}{n-1}} \varrho\right)^{\frac{n-1}{n+1}}\\
										 & = \left(\int_{M}|A|^{n+1}\right)^{\frac{2}{n+1}} \cdot\left(\int_{M}\left(|{\Phi}|^{a} \eta \right)^{\frac{2(n+1)}{n-1}} \varrho\right)^{\frac{n-1}{n+1}}.
\end{aligned}
 \end{equation}
 Our goal is to decrease the number of conditions in theorem \ref{th:th12}, only one condition instead of two as in Theorem \ref{th:th12}, so we should choose $a=\frac{n+1}{2}$. For that reason, combining \eqref{eq:eq14} and \eqref{eq:eq15}, we have
 $$\begin{aligned}
\kappa_{1}^{-1} \left[\int_{M} \left(|{\Phi}|^{\frac{n+1}{2}} \eta \right)^{\frac{2(n+1)}{n-1}} \rho \right]^{\frac{n-1}{n+1}}&\leq \frac{(n+1)^2(1+\delta)}{8\left(\frac{n+1}{2}-\frac{1}{2}-\varepsilon\right)} \left(\int_{M}|A|^{n+1}\right)^{\frac{2}{n+1}} \cdot\left(\int_{M}\left(|{\Phi}|^{\frac{n+1}{2}} \eta \varrho \right)^{\frac{2(n+1)}{n-1}} \varrho\right)^{\frac{n-1}{n+1}}\\
&\quad+ \tilde{C}(n,\delta,\varepsilon) \int_{M}|{\Phi}|^{n+1}|\nabla \eta|^{2} \varrho ,
\end{aligned}$$
Put $$K_1(n,\varepsilon,\delta) = \sqrt{\frac{8\left(\frac{n+1}{2}-\frac{1}{2}-\varepsilon\right)}{(n+1)^2(1+\delta)\kappa_1}},$$
and 
$$K_1(n) = \sup_{\delta>0, \,0< \varepsilon <a-\frac{1}{2}} K_1(n,\varepsilon,\delta) = \sqrt{\frac{(n-1)^2}{C(n)^2(n+1)^2 n^2}}.$$
Applying the argument as in the proof of Theorem \ref{th:th12}, we have the following result. 
\begin{theorem}
Let $M^n (n \geq 3)$ be a smooth complete translating soliton in
the Euclidean space $\mathbb{R}^{n+1}$ with Sobolev inequality \eqref{sobolev}. If the second fundamental form $A$ of $M$
satisfies
$$\left(\int_{M} |A|^{n+1} \varrho \right)^{\frac{1}{n+1}}<K_1(n),$$
where $K_1(n)$ is defined as above , then $M$ is a
hyperplane.
\end{theorem}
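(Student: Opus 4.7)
My plan is to mirror the closing steps of the proofs of Theorem \ref{th:th12} and Theorem \ref{main2}, leveraging the chain of estimates carried out immediately above the statement. That preparation already produces, for every $0<\varepsilon<\frac{n}{2}$, every $\delta>0$, and every compactly supported cut-off $\eta$, a functional inequality of the form
$$\kappa_1^{-1}\Bigl[\int_M\bigl(|\Phi|^{(n+1)/2}\eta\bigr)^{\frac{2(n+1)}{n-1}}\varrho\Bigr]^{\frac{n-1}{n+1}}\le K_1(n,\varepsilon,\delta)^{-2}\kappa_1^{-1}\|A\|_{n+1,\varrho}^{2}\Bigl[\int_M\bigl(|\Phi|^{(n+1)/2}\eta\bigr)^{\frac{2(n+1)}{n-1}}\varrho\Bigr]^{\frac{n-1}{n+1}}+\tilde C(n,\delta,\varepsilon)\int_M|\Phi|^{n+1}|\nabla\eta|^{2}\varrho.$$

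The first move is an absorption. Because $\|A\|_{n+1,\varrho}<K_1(n)$ and $K_1(n)$ is the supremum of $K_1(n,\varepsilon,\delta)$, one can select $\varepsilon_0,\delta_0$ and a constant $\check K$ with $\|A\|_{n+1,\varrho}\le\check K<K_1(n,\varepsilon_0,\delta_0)$; consequently the prefactor $K_1(n,\varepsilon_0,\delta_0)^{-2}\|A\|_{n+1,\varrho}^{2}$ is strictly less than $1$, and the first term on the right-hand side can be transferred to the left. This reduces the inequality to
$$\tfrac{\epsilon}{\kappa_1}\Bigl[\int_M\bigl(|\Phi|^{(n+1)/2}\eta\bigr)^{\frac{2(n+1)}{n-1}}\varrho\Bigr]^{\frac{n-1}{n+1}}\le\bar C(n,\varepsilon_0,\delta_0)\int_M|\Phi|^{n+1}|\nabla\eta|^{2}\varrho$$
for some $\epsilon\in(0,1)$ independent of $\eta$.

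The second move is the standard cut-off passage to the limit. I would take $\eta=\eta_r(X)=\phi(|X|/r)$ with $\phi$ the bump function used in the proof of Theorem \ref{th:th12}, so that $|\nabla\eta_r|\le C/r$ on the annular shell $\{r\le|X|\le 2r\}$. Since $|\Phi|\le|A|$ and $\int_M|A|^{n+1}\varrho<\infty$, the right-hand side is bounded by $\bar C\,r^{-2}\int_{\{r\le|X|\le 2r\}}|A|^{n+1}\varrho$, which vanishes as $r\to\infty$. This forces $|\Phi|\equiv 0$ on $M$. Finally, reproducing the last step of the proof of Theorem \ref{th:th12}, the vanishing of $|\Phi|$ forces the refined Kato identity $|\nabla A|=|\nabla|A||$, after which the argument of Impera and Rimoldi \cite{IR17} yields that $M$ is a linear subspace; in codimension one this subspace is a hyperplane, as claimed.

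The one spot I expect to require genuine care is the explicit identification $K_1(n)=\lim_{(\varepsilon,\delta)\to(0^+,0^+)}K_1(n,\varepsilon,\delta)$, on which the whole absorption step hinges. This is a closed-form optimisation of $K_1(n,\varepsilon,\delta)$ in two variables: monotonicity in each variable is evident from the formula in the excerpt, but the closed-form expression advertised for $K_1(n)$ must be double-checked against this limit before the absorption argument can be declared watertight. Once that arithmetic verification is in place, the rest of the proof is routine.
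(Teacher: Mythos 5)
Your proposal is correct and follows essentially the same route as the paper: apply the Sobolev inequality \eqref{sobolev} to $u=|\Phi|^{(n+1)/2}\eta$, use the Simons-type estimate \eqref{eq:eq6} and H\"older to absorb the leading term under the smallness hypothesis, let the cut-off exhaust $M$ to get $|\Phi|\equiv 0$, and then invoke the rigidity argument from the end of the proof of Theorem \ref{th:th12}. Your caution about verifying the closed-form value of $K_1(n)$ is well placed: taking $\varepsilon,\delta\to 0^+$ in $K_1(n,\varepsilon,\delta)$ with $\kappa_1=\bigl(\tfrac{2C(n)n}{n-1}\bigr)^2$ yields $\tfrac{(n-1)^2}{C(n)^2(n+1)^2 n}$ rather than the $\tfrac{(n-1)^2}{C(n)^2(n+1)^2 n^2}$ displayed in the text, so the advertised constant appears to carry a harmless but real arithmetic slip that does not affect the structure of the argument.
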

\section*{Conflict of interest statement}
On behalf of all authors, the corresponding author states that there is no conflict of interest.

\section*{Acknowledgement} 
This work was initial started during a stay of the second author at VIASM. He would like to thank the staff there for hospitality and support.

\bigskip
\address{{\it Ha Tuan Dung}\\
Faculty of Mathematics \\
Hanoi Pedagogical University No. 2\\ 
Xuan Hoa, Vinh Phuc, Vietnam}
{hatuandung@hpu2.edu.vn}

\address{ {\it Nguyen Thac Dung}\\
Faculty of Mathematics - Mechanics - Informatics \\
Vietnam National University, University of Science, Hanoi \\
Hanoi, Viet Nam and \\
Thang Long Institute of Mathematics and Applied Sciences (TIMAS)\\
Thang Long Univeristy\\ 
Nghiem Xuan Yem, Hoang Mai\\
Hanoi, Vietnam
}
{dungmath@gmail.com or dungmath@vnu.edu.vn}
\address{{\it Tran Quang Huy}\\
Faculty of Mathematics - Mechanics - Informatics\\
Vietnam National University, University of Science, Hanoi \\
Hanoi, Viet Nam}
{\\tranquanghuy11061998@gmail.com}

\end{document}